\def\({\left(}
\def\){\right)}
\newtheorem{lema}{Lemma}[section]
\newtheorem*{teorema*}{Theorem}
\newtheorem{remark}[lema]{Remark}
\newtheorem{corollary}[lema]{Corollary}
\newtheorem{theorem}[lema]{Theorem}
\newtheorem{definition}[lema]{Definition}
\hfill \fbox{}}
\hfill \fbox{}}
\def\beq{\begin{equation}}
\def\eeq{\end{equation}}
\def\epsilon{\varepsilon}
\begin{document}

\title[Bicomplex Mergelyan's Approximation Theorems ]{
Bicomplex Mergelyan's Approximation Theorems}

\author{Amjad Ali}
\address{Department of Mathematics, University of Jammu,
Jammu 180006, INDIA.} \email{amjadladakhi687@gmail.com}

\author{Mohd arif}
\address{Department of Mathematics, University of Jammu,
Jammu 180006, INDIA.} \email{azizymaths@gmail.com}


\author{Romesh Kumar}
\address{Department of Mathematics, University of Jammu,
Jammu 180006, INDIA.} 
\email{romeshmath@gmail.com}
\thanks{The first author is supported by UGC in the form of JRF(Junior Research Fellowship) and the second author is supported by CSIR with JRF via Scholarship file No. 09/100(0215)/2018-EMR-I }
\subjclass{Primary 30G35}
\keywords{ Bicomplex numbers, product-type set, product-type function, bicomplex holomorphic functions, bicomplex univalent function, product-type compact set}
\date{February 2022}

\begin{abstract} 
In this paper we prove Area theorem, Biebarbach's  Theorem,  Koebe Quarter Theorem and Mergelyan's Approximation Theorem in the bicomplex framework.
\end{abstract}

\maketitle

\section{Introduction and Preliminaries}

The main aim of this paper is to prove the bicomplex version of some fundamental theorems form univalent function theory. Some work in this direction was initiated in \cite{Arif1} and \cite{Arif2}.

We define set of bicomplex numbers denoted by $\mathbb{BC}$ as 
$$\mathbb{BC}=\left\{Z=x_1+x_2\mathbf{i}+x_3\mathbf{j}+x_4\mathbf{k}: x_1,x_2,x_3,x_4 \in \mathbb{R}\right\}= \left\{Z=Z_1+\mathbf{j}Z_2: Z_1,Z_2 \in \mathbb{C}(\mathbf{i})\right\},$$
where $\mathbf{i}$ and $\mathbf{j}$ are two imaginary units, satisfying 
$$ \mathbf{i} \neq \mathbf{j};~~~\mathbf{ij}=\mathbf{ji} =\mathbf{k}; ~~~ \mathbf{i^2}=\mathbf{j^2}=-1. $$
Due to the fact that the set $\mathbb{BC}$ has two imaginary units i.e., $\mathbf{i}$ and $\mathbf{j}$, $\mathbb{BC}$ has  three conjugations. The bar-conjugation, $\dagger$-conjugation and  $\ast$-conjugation  defined as $\overline{Z}=\overline{Z}_{1}+\mathbf{j} \overline{Z}_{2}$,
$Z^{\dagger}=Z_1-\mathbf{j}Z_2$ and $Z^\ast=\overline{Z}^{\dagger}=\overline{Z}_{1}-\mathbf{j}\overline{Z}_{2},$ respectively, where $\overline{Z}_{1},\overline{Z}_{2}$ are the usual conjugations of complex numbers $Z_1,Z_2$ in $\mathbb{C}(\mathbf{i}).$
Accordingly, three types of  moduli  arise and these are $Z \cdot Z^{\dagger}$, $ Z \cdot \overline{Z}$ and $ Z \cdot Z^{\ast}$.
It is to be noted that  these moduli are $\mathbb{C}(\mathbf{i}),\mathbb{C}(\mathbf{j})$ and $\mathbb{D}$-valued respectively.
For details of conjugations on set of bicomplex numbers, see \cite{Alpa}, \cite{Luna6} and \cite{Pric}.
Further, if  $Z \ne 0$, $Z \cdot Z^{\dagger}=|Z|_{\mathbf{i}}^{2}=0$, then $Z$ is said to be a zero-divisor. We denote the set of all zero-divisors by
$$\mathcal{O}=\left\{Z=Z_1+\mathbf{j}Z_2: Z \ne 0,Z \cdot Z^{\dag}=Z_1^2+Z_2^2=0\right\}$$
and is called the null cone of the set of bicomplex numbers $\mathbb{BC}$. Also we denote $\mathcal{O}_0 = \mathcal{O} \cup \{0\}.$ \\
There are two special zero divisors called idempotent elements defined as
$$\mathbf{e_1} =\frac{1}{2}(1+\mathbf{k})~~~\mbox{and}~~~\mathbf{e_2}=\frac{1}{2}(1-\mathbf{k})$$
and have following properties
\begin{equation}
\label{eq:1}
\mathbf{e_1} + \mathbf{e_2} = 1;~~~~ \mathbf{e_1} -  \mathbf{e_2} = \mathbf{k};~~~~
\mathbf{e_1}\cdot\mathbf{e_2} = 0 ; ~~~~~ \mathbf{e_1}\cdot\mathbf{e_1} = \mathbf{e_1} ; ~~~~~~ \mathbf{e_2}\cdot\mathbf{e_2} = \mathbf{e_2}.
\end{equation}
We can see that $\mathbf{e_1}$ and $\mathbf{e_2}$ are zero divisors and are mutually complementary idempotent elements.
The sets $\mathbb{BC}_{\mathbf{e_1}}=\mathbf{e_1}\mathbb{BC}$ and $\mathbb{BC}_{\mathbf{e_2}}=\mathbf{e_2}\mathbb{BC}$ are (principal) ideals in the ring $\mathbb{BC}$ and have the decomposition property 
$$\mathbb{BC}_{\mathbf{e_1}}\cap \mathbb{BC}_{\mathbf{e_2}}=\{0\}$$ 
and
\begin{equation}
\mathbb{BC}=\mathbb{BC}_{\mathbf{e_1}}+\mathbb{BC}_{\mathbf{e_2}}.
\end{equation} 
Thus every bicomplex number have idempotent representation in $\mathbf{e_1}$ and $\mathbf{e_2}$ as  
$$ Z = \beta_1 \mathbf{e_1} + \beta_2 \mathbf{e_2} ,$$
where $\beta_1=Z_{1}-\mathbf{i}Z_{2}$ and $\beta_2=Z_{1}+\mathbf{i}Z_{2}$ are complex numbers.
The $\mathbb{D}$-valued norm of the bicomplex number $Z$ denoted  by $|Z|_{\mathbf{k}}$ is defined as $|Z|_{\mathbf{k}}=|\beta_1|\mathbf{e_1}+|\beta_2|\mathbf{e_2}$, where $|\beta_1|$ and $|\beta_2|$ are the  usual modulus of complex numbers $\beta_1$ and $\beta_2$. Further $|Z\cdot W|_{\mathbf{k}}=|Z|_{\mathbf{k}}\cdot|W|_{\mathbf{k}}$. For above discussions we refer to \cite{Alpa}, \cite{Luna6} and \cite{Pric}.

The $\overline{\mathbb{BC}}$ is not  one point Alexendrov compactification, but is the union of $\mathbb{BC}$ with three different types of infinities:
$$ \overline{\mathbb{BC}} = \mathbb{BC} \cup \left\{ \infty\mathbf{e_1} + \mathbb{C}(\mathbf{i})\mathbf{e_2}\right\} \cup \left\{ \mathbb{C}(\mathbf{i})\mathbf{e_1} + \infty\mathbf{e_2}\right\} \cup  \left\{ \infty\mathbf{e_1} + \infty\mathbf{e_2}\right\}. $$ 
Thus infinity in $\mathbb{BC}$ have three different  type of elements. For more details we refer to \cite{Luna8}.

A set $\Omega \subset \mathbb{BC}$ is said to be product-type set if  $\Omega$ can be written as $\Omega = \Omega_1 \mathbf{e_1} + \Omega_2 \mathbf{e_2} $ where $\Omega_1 = \Pi_{1,\mathbf{i}}(\Omega)$ and $\Omega_2 = \Pi_{2,\mathbf{i}}(\Omega)$ are the projections of $\mathbb{BC}$ on $\mathbb{C}(\mathbf{i})$.
A set $\Omega \subset \mathbb{BC} $ is said to be product-type domain in $\mathbb{BC}$  if  $\Omega_1$ and $\Omega_2 $ are  domains in the complex plane $\mathbb{C}(\mathbf{i})$. A function $F:\Omega\to \mathbb{BC}$ is said to be product-type function if there exist $F_i:\Omega_i \to \mathbb{C}(\mathbf{i})$ for $i = 1,2$ such that $F(\beta_1 \mathbf{e_1} + \beta_2 \mathbf{e_2}) = F_1(\beta_1)\mathbf{e_1} + F_2(\beta_2)\mathbf{e_2}$ for all $\beta_1 \mathbf{e_1} + \beta_2 \mathbf{e_2} \in \Omega$.  Also, if $F$ and $G$ are bicomplex product-type functions defined on $\Omega\subset \mathbb{BC}$ and $A \in \mathbb{BC}$, then $F+G$, $AF,~ \left|F\right|_{\mathbf{k}}$ are product-type and further if $H:S\subset \mathbb{BC} \to \Omega$  a product-type function, then we have $F\circ H$  also product-type. For above discussions we refer to \cite{Dave}, \cite{Luna6} and \cite{Reye}.

\begin{definition}
Let $\Omega \subset \mathbb{BC}$ be an open set. Then the mapping $F:\Omega \to \mathbb{BC}$ is called $\mathbb{BC}$-holomorphic in $\Omega$ if the limit 
$$F^{'}(Z_{o})=\underset{h=Z-Z_{0}\notin \mathcal{O}_{0}}{\underset{h \to 0}{lim}}\frac{F(Z)-F(Z_{o})}{Z-Z_{o}}$$
																	exist for every $Z \in \Omega$.   
\end{definition}

\begin{definition}
Let $\Omega=\Omega_1\mathbf{e_1}+\Omega_2\mathbf{e_2}\subseteq\mathbb{BC}$ be an open set. Then a function $F:\Omega \to \mathbb{BC}$ is said to be a $\mathbb{BC}$-univalent function on $\Omega$ if the following conditions are satisfied:\\
(a) $F$ is product-type function.\\
(b) $F(Z_{1}) \neq F(Z_{2}),~\forall~Z_{1},Z_{2} \in \Omega$  with $Z_{1} \neq Z_{2}.$
\end{definition} 


\begin{definition}
A unit disk in $\mathbb{BC}$ is defined as:
$$\mathbb{D}_{\mathbb{BC}}=\{Z=\beta_{1}\mathbf{e}_{1}+\beta_{2}\mathbf{e}_{2}:(\beta_{1},\beta_{2}) \in \mathbb{D} \times \mathbb{D} \},$$
where $\mathbb{D}$ be a complex valued unit disk in $\mathbb{C}(\mathbf{i})$.
\end{definition}

\begin{definition}\cite[Definition 2.3, Page-6]{Arif2}
Let $\mathcal{F}$ denote the set of $\mathbb{BC}$-holomorphic, $\mathbb{BC}$-univalent functions on the unit disk $\mathbb{D}_{\mathbb{BC}}$ normalized by the condition $F(0)=0$ and $F^{'}(0)=1$. That is,
$$\mathcal{F}=\{F:\mathbb{D}_{\mathbb{BC}} \to \mathbb{BC} :\mbox{F is $\mathbb{BC}$-holomorphic and $\mathbb{BC}$-univalent on}~ \mathbb{D}_{\mathbb{BC}}, F(0)=0, F^{'}(0)=1\}.$$
\end{definition}

\begin{definition}
Let $\Omega=\Omega_1\times\Omega_2\subseteq\mathbb{BC}$ be an open set. Then we say that a function $R: \Omega \to \mathbb{BC}$  is $\mathbb{BC}$-rational if  $'R'$ is the quotient of two continuous $\mathbb{BC}$-functions i.e.,
 $$R(Z) = \frac{G(Z)}{H(Z)}~~ \mbox{such~that}~ H(Z) \notin \mathcal{O}_0.$$ 
Also  bicomplex holomorphic rational functions  are product-type, i.e., there exist $R_i: \Omega_i \to \mathbb{C}(\mathbf{i})$ for $i = 1,2$ such that $R(\beta_1 \mathbf{e_1} + \beta_2 \mathbf{e_2}) = R_1(\beta_1)\mathbf{e_1} + R_2(\beta_2)\mathbf{e_2}$ for all $\beta_1 \mathbf{e_1} + \beta_2 \mathbf{e_2} \in \Omega$.
\end{definition}

A function $F(\beta_1 \mathbf{e_1} + \beta_2 \mathbf{e_2}) = F_1(\beta_1 \mathbf{e_1} + \beta_2 \mathbf{e_2})\mathbf{e_1} + F_2(\beta_1 \mathbf{e_1} + \beta_2 \mathbf{e_2})\mathbf{e_2}$  is $\mathbb{BC}$-holomorphic if and only if $F_1{(\beta_1 \mathbf{e_1} + \beta_2 \mathbf{e_2})}$ and $F_{2}{(\beta_1 \mathbf{e_1} + \beta_2 \mathbf{e_2})}$  are  holomorphic functions with respect to only $\beta_1$ and $\beta_2$ respectively  and $F(Z_1 + \mathbf{i} Z_2 ) = G_1Z_1 + \mathbf{i} Z_2 ) + \mathbf{j} G_2(Z_1 + \mathbf{i} Z_2 )$ is $\mathbb{BC}$-holomorphic iff $G_1$ and $G_2$ are holomorphic functions with respect $Z_1$ and $Z_2$ respectively. For the above discussion  we refer to \cite{Alpa},\cite{Luna6} and \cite{Reye}.  Also the concepts of $\mathbb{BC}$-rectifiable curve, $\mathbb{BC}$-Jorden curve, $\mathbb{BC}$-closed curves, $\mathbb{BC}$-Integral are well established in \cite{Luna6} and \cite{Reye}. For an excurtion in bicomplex analysis and its applications, one can refer to  \cite{Alpa}, \cite{Cato3}, \cite{Luna3}, \cite{Luna6}, \cite{Luna8}, \cite{Luna10}, \cite{Pric}, \cite{Reye}, \cite{Sain} and references therein.

\section{Bicomplex Koebe Quarter theorem}

In this section, we study Area Theorem, Bieberbach's Theorem and Koebe Quarter Theorem for bicomplex scalars. Let us denote the class of $\mathbb{BC}$-holomorphic, $\mathbb{BC}$-univalent functions on $\Delta=\{Z:|Z|_{\mathbf{k}}\succ 1 \}$ by $\Sigma$. That is,\\
$\Sigma=\{G:\Delta\rightarrow\mathbb{BC}:G(Z)=Z+B_{0}+B_{1}Z^{-1}+B_{2}Z^{-2}+..., ~\mbox{where}~Z\notin\mathcal{O}_0,~ B_{n}\in\mathbb{BC}~ \mbox{and}~  G(Z) ~ \mbox{is} ~ \mathbb{BC}-\mbox{holomorphic},~ \mathbb{BC}-\mbox{univalent}.\}$

Let $E=\mathbb{BC}\backslash G(\Delta)$, where $G\in \Sigma$. Then, $G$ maps $\Delta$ onto the complement of the $\mathbb{BC}$-compact connected set $E$.\\ 

Before we establish bicomplex version of Area Theorem. We start with the statement of bicomplex version of Green Theoerm \cite[Theorem 5.4, Page-40]{Stef}.

\begin{theorem} \label{TH1}

Let $\Phi_{1},\Phi_{2}:G\subseteq\mathbb{BC}\longrightarrow\mathbb{BC}$ be a $\mathbb{BC}$-holomorphic function of $U$ and $V$ in a product-type domain $G$. Also let $S(\Gamma)$ be a product-type orentable surface in $G$ bounded by the $\mathbb{BC}$-closed curve $\Gamma=\gamma_{1}\mathbf{e_1}+\gamma_{2}\mathbf{e_2}$. Then
\begin{equation}\label{EQ11}
\int_{\Gamma}\Phi_{1}dU+\Phi_{2}dV=\int_{S(\Gamma)}\left(\frac{\partial\Phi_{2}}{\partial U}-\frac{\partial\Phi_{1}}{\partial V}\right)dU\wedge dV.
\end{equation}

\end{theorem}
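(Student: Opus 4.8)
The plan is to exploit the product-type hypotheses to reduce \eqref{EQ11} to two copies of the classical (complex) Green's Theorem, one on each idempotent component, and then to reassemble them by means of the identities \eqref{eq:1}. First I would write every object in idempotent coordinates: $\Phi_m = \phi_m^{(1)}\mathbf{e_1} + \phi_m^{(2)}\mathbf{e_2}$ for $m=1,2$, $U = u_1\mathbf{e_1} + u_2\mathbf{e_2}$, $V = v_1\mathbf{e_1} + v_2\mathbf{e_2}$, $\Gamma = \gamma_1\mathbf{e_1} + \gamma_2\mathbf{e_2}$, $G = G_1\mathbf{e_1} + G_2\mathbf{e_2}$ with $G_k = \Pi_{k,\mathbf{i}}(G) \subseteq \mathbb{C}(\mathbf{i})$, and — since $S(\Gamma)$ is a product-type surface — $S(\Gamma) = S_1(\gamma_1)\mathbf{e_1} + S_2(\gamma_2)\mathbf{e_2}$, where for $k=1,2$ the component $S_k(\gamma_k)$ is an orientable surface in $G_k$ bounded by the closed curve $\gamma_k$. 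Because $\Phi_1,\Phi_2$ are $\mathbb{BC}$-holomorphic, the characterisation of $\mathbb{BC}$-holomorphy recalled in Section~1 shows each $\phi_m^{(k)}$ is holomorphic in the single complex variable $u_k$ (respectively $v_k$) on $G_k$, so the planar partials $\partial\phi_m^{(k)}/\partial u_k$ and $\partial\phi_m^{(k)}/\partial v_k$ make sense.

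Next I would push the decomposition through each term of \eqref{EQ11}. Using $\mathbf{e_1}^2 = \mathbf{e_1}$, $\mathbf{e_2}^2 = \mathbf{e_2}$, $\mathbf{e_1}\mathbf{e_2} = 0$ one gets $\Phi_1\,dU = \phi_1^{(1)}\,du_1\,\mathbf{e_1} + \phi_1^{(2)}\,du_2\,\mathbf{e_2}$ and $\Phi_2\,dV = \phi_2^{(1)}\,dv_1\,\mathbf{e_1} + \phi_2^{(2)}\,dv_2\,\mathbf{e_2}$; likewise the bracketed derivative $\frac{\partial\Phi_2}{\partial U} - \frac{\partial\Phi_1}{\partial V}$ and the area form $dU\wedge dV$ decompose along $\mathbf{e_1}$ and $\mathbf{e_2}$, with $k$-th components $\frac{\partial\phi_2^{(k)}}{\partial u_k} - \frac{\partial\phi_1^{(k)}}{\partial v_k}$ and $du_k\wedge dv_k$ respectively. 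Since, by their definitions in \cite{Luna6} and \cite{Reye}, the $\mathbb{BC}$-line integral over a product-type curve and the $\mathbb{BC}$-surface integral over a product-type surface are evaluated componentwise against $\mathbf{e_1}$ and $\mathbf{e_2}$, both sides of \eqref{EQ11} take the form $(\,\cdot\,)\mathbf{e_1} + (\,\cdot\,)\mathbf{e_2}$, and it suffices to establish the scalar identity on each component.

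On the $k$-th component the required identity is exactly the classical Green's Theorem — the $\mathbb{C}(\mathbf{i})$-valued planar instance of \cite[Theorem 5.4, Page-40]{Stef} — applied to the $1$-form $\phi_1^{(k)}\,du_k + \phi_2^{(k)}\,dv_k$ on the orientable surface $S_k(\gamma_k)$ bounded by $\gamma_k$, that is
$$\int_{\gamma_k}\phi_1^{(k)}\,du_k + \phi_2^{(k)}\,dv_k = \int_{S_k(\gamma_k)}\left(\frac{\partial\phi_2^{(k)}}{\partial u_k} - \frac{\partial\phi_1^{(k)}}{\partial v_k}\right)du_k\wedge dv_k,\qquad k=1,2.$$
Multiplying the identity for $k=1$ by $\mathbf{e_1}$ and that for $k=2$ by $\mathbf{e_2}$, adding, and using $\mathbf{e_1}+\mathbf{e_2}=1$ together with the componentwise description of the $\mathbb{BC}$-integrals recovers \eqref{EQ11}.

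I expect the only genuine difficulty to be definitional rather than analytic: one must make precise what is meant by a ``product-type orientable surface $S(\Gamma)$ bounded by the $\mathbb{BC}$-closed curve $\Gamma$'', so that the splitting $S(\Gamma) = S_1(\gamma_1)\mathbf{e_1} + S_2(\gamma_2)\mathbf{e_2}$ is legitimate, that each $S_k(\gamma_k)$ is genuinely bounded by $\gamma_k$ inside $G_k$, and that the bicomplex area element $dU\wedge dV$ is compatible with the two planar area elements $du_k\wedge dv_k$. Once the componentwise definitions of the $\mathbb{BC}$-curve, $\mathbb{BC}$-surface and $\mathbb{BC}$-integral from \cite{Luna6} and \cite{Reye} are adopted, what remains is the bookkeeping above together with one invocation of the classical theorem per component.
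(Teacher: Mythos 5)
The paper does not actually prove this statement: Theorem~\ref{TH1} is quoted verbatim as ``the statement of bicomplex version of Green Theorem'' with a citation to \cite[Theorem 5.4, Page-40]{Stef}, and no proof environment follows it. So there is no in-paper argument to compare yours against. That said, your componentwise reduction is exactly the mechanism this paper (and \cite{Stef}, \cite{Luna6}, \cite{Reye}) uses for every product-type assertion --- compare the proof of the bicomplex Mergelyan theorem, which splits $F$, $K$ and the sup-norm along $\mathbf{e_1}$, $\mathbf{e_2}$, invokes the classical theorem on each factor, and reassembles using $\mathbf{e_1}+\mathbf{e_2}=1$ and $\mathbf{e_1}\mathbf{e_2}=0$. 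Your computation of the splitting of $dU\wedge dV$ (the cross terms dying because $\mathbf{e_1}\mathbf{e_2}=0$) and of the $1$-form $\Phi_1\,dU+\Phi_2\,dV$ is correct, and the identity on each idempotent slice is indeed the classical planar Green's theorem. Two small cautions. First, your gloss that each $\phi_m^{(k)}$ is ``holomorphic in the single complex variable $u_k$ (respectively $v_k$)'' is looser than what is needed or what the hypothesis gives: for Green's theorem on the slice you only need $\phi_1^{(k)},\phi_2^{(k)}$ to be $C^1$ in the pair of planar variables $(u_k,v_k)$, which the ($\mathbb{BC}$-)holomorphy hypothesis supplies with room to spare; it is cleaner to say that than to attach holomorphy to one variable each. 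Second, the definitional issue you flag at the end is the real content here: the ``surface'' $S(\Gamma)=S_1(\gamma_1)\mathbf{e_1}+S_2(\gamma_2)\mathbf{e_2}$ is four-real-dimensional and its ``boundary'' $\Gamma$ is two-real-dimensional, so the componentwise definition of the $\mathbb{BC}$-line and surface integrals from \cite{Luna6} and \cite{Reye} must be adopted explicitly before the slicewise application of Green's theorem is legitimate; once it is, your argument closes the gap the paper leaves open by citing \cite{Stef} without proof.
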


\begin{theorem}\label{Thrm1}
Suppose $G(Z)=Z+B_{0}+\sum_{n=1}^{\infty}{B_{n}Z^{-n}}\in\Sigma$ such that $Z\notin\mathcal{O}_0$. Then $\sum_{n=1}^{\infty}{n|B_{n}|^{2}_{\mathbf{k}}}\preceq 1$.
\end{theorem}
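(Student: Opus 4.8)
The plan is to reduce the statement to the classical Area Theorem via the idempotent decomposition, and to note that this reduction is exactly the componentwise content of the bicomplex Green's Theorem \ref{TH1}, which is why that theorem was recorded first.

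First I would write $G = G_1\mathbf{e_1} + G_2\mathbf{e_2}$ in idempotent form. Since $G$ is product-type and $\mathbb{BC}$-holomorphic, each $G_j$ is an ordinary holomorphic function on the exterior disc $\{z \in \mathbb{C}(\mathbf{i}) : |z| > 1\}$ (the $j$-th projection of $\Delta$), and from $Z^{-n} = \beta_1^{-n}\mathbf{e_1}+\beta_2^{-n}\mathbf{e_2}$ the series for $G$ splits into $G_j(z) = z + b_0^{(j)} + \sum_{n\geq 1} b_n^{(j)} z^{-n}$, where $B_n = b_n^{(1)}\mathbf{e_1} + b_n^{(2)}\mathbf{e_2}$. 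Moreover each $G_j$ is univalent: if $G_1(\beta_1) = G_1(\beta_1')$ with $\beta_1\neq\beta_1'$, then for any admissible $\beta_2$ the distinct points $\beta_1\mathbf{e_1}+\beta_2\mathbf{e_2}$ and $\beta_1'\mathbf{e_1}+\beta_2\mathbf{e_2}$ have the same image under $G$, contradicting $\mathbb{BC}$-univalence; similarly for $G_2$. Hence $G_1,G_2$ lie in the classical class $\Sigma$.

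Next, fix a real $r > 1$ and, for $j=1,2$, let $\gamma_r^{(j)} = G_j(\{|z| = r\})$, a Jordan curve by univalence. I would compute the enclosed area by Green's formula — this is Theorem \ref{TH1} applied to the product-type orientable surface $S(\Gamma_r)$ bounded by $\Gamma_r = \gamma_r^{(1)}\mathbf{e_1} + \gamma_r^{(2)}\mathbf{e_2}$, with the choice $\Phi_1 = \overline{W}$, $\Phi_2 = W$. Substituting $z = re^{\mathbf{i}\theta}$, $W = G_j(z)$, $dW = G_j'(z)\,dz$, using term-by-term integration (legitimate since the Laurent series converges uniformly on $|z|=r$ for $r>1$) and the orthogonality of $\{e^{\mathbf{i}m\theta}\}$ over $[0,2\pi]$, one obtains
\[
A_j(r) = \pi\left(r^2 - \sum_{n=1}^{\infty} n\,|b_n^{(j)}|^2\, r^{-2n}\right).
\]
Being the area enclosed by a Jordan curve, $A_j(r)\ge 0$, so $\sum_{n=1}^{\infty} n\,|b_n^{(j)}|^2\, r^{-2n} \leq r^2$ for all $r>1$; letting $r\to 1^{+}$ with monotone convergence gives $\sum_{n=1}^{\infty} n\,|b_n^{(j)}|^2 \leq 1$ for $j=1,2$. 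Finally, from $|Z|_{\mathbf{k}} = |\beta_1|\mathbf{e_1} + |\beta_2|\mathbf{e_2}$ together with $\mathbf{e_1}^2=\mathbf{e_1}$, $\mathbf{e_2}^2=\mathbf{e_2}$, $\mathbf{e_1}\mathbf{e_2}=0$ we get $|B_n|_{\mathbf{k}}^2 = |b_n^{(1)}|^2\mathbf{e_1} + |b_n^{(2)}|^2\mathbf{e_2}$, whence
\[
\sum_{n=1}^{\infty} n\,|B_n|_{\mathbf{k}}^2 = \left(\sum_{n=1}^{\infty} n\,|b_n^{(1)}|^2\right)\mathbf{e_1} + \left(\sum_{n=1}^{\infty} n\,|b_n^{(2)}|^2\right)\mathbf{e_2} \preceq \mathbf{e_1} + \mathbf{e_2} = 1,
\]
since $a\mathbf{e_1}+b\mathbf{e_2} \preceq c\mathbf{e_1}+d\mathbf{e_2}$ in $\mathbb{D}$ means precisely $a\le c$ and $b\le d$.

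I expect the main obstacle to be the careful setup of the area integral inside the bicomplex framework rather than any analytic difficulty: one must pin down the orientation conventions in Theorem \ref{TH1} so that the $\mathbb{D}$-valued quantity produced is genuinely $\succeq 0$, and check that $S(\Gamma_r)$ qualifies as an admissible product-type orientable surface. Once that is in place, the term-by-term integration, the orthogonality computation, and the limit $r\to 1^{+}$ are just the classical Area Theorem argument performed in each idempotent slot.
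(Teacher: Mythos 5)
Your proposal is correct and follows essentially the same route as the paper's own proof: idempotent decomposition of $G$, the bicomplex Green's Theorem to express the enclosed area, the classical Area Theorem argument in each idempotent slot, and the limit $r\to 1^{+}$ assembled through the hyperbolic order. Your write-up is in fact somewhat more careful than the paper's (notably in verifying that each component $G_j$ is itself univalent and hence in the classical class $\Sigma$), but the underlying argument is the same.
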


\begin{proof}
Let $E=\mathbb{BC}\backslash G(\Delta)$, and let $\Gamma_{r}$ be the image under $F~(\mbox{where}~ F\in\mathcal{F})$ of the circle $|Z|_{\mathbf{k}}=r$. In terms of the coefficients of $\mathbb{BC}$-Taylor's series, we want to calculate the area of $E$. Let's estimate $E$ from the outside using the domain $E_{r}=\mathbb{BC}\backslash\{G(Z);|Z|_{\mathbf{k}}\succ r\}$, as $E$ might be extremely erratic. As we know, $G$ is $\mathbb{BC}$-univalent, $\Gamma_{r}$ is smooth simple $\mathbb{BC}$-closed curve which enclose the domain $E_{r}$, so we can write $G(Z)=G_1(\beta_1)\mathbf{e}_{1}+G_2(\beta_2)\mathbf{e}_{2}$, $\Gamma_{r}=\Gamma_{r_1}\mathbf{e}_{1}+\Gamma_{r_2}\mathbf{e}_{2}$ and
$$Area(E_{r})=\underset{E_{r}}{\int} {dU \cdot dV}.$$
Now using theorem \ref{TH1}, taking $\Phi_1=V$ and $\Phi_2=0$ and using polar representation of bicomplex numbers \cite[Page-65]{Luna6}, we have

\begin{eqnarray*}
Area(E_{r})&=&\int_{\Gamma_{r}}V\cdot dU\\
&=&\int_{|Z|_{\mathbf{k}}=r}{G(Z)G'(Z)dZ\wedge dZ^{\dag}}\\
&=& \int_{0}^{2\pi}{r e^{\mathbf{i}\theta } G^{'}(r e^{\mathbf{i}\theta})G(r e^{\mathbf{i}\theta})d\theta}\\
&=& \int_{0}^{2\pi}{r_1 e^{\mathbf{i}\theta_1} G_1^{'}(r_1 e^{\mathbf{i}\theta_1})G_1(r_1 e^{\mathbf{i}\theta_1})d\theta_1\mathbf{e}_{1}}+\int_{0}^{2\pi}{r_2 e^{\mathbf{i}\theta_2} G_2^{'}(r_2 e^{\mathbf{i}\theta_2})G_2(r_2 e^{\mathbf{i}\theta_2})d\theta_2\mathbf{e}_{1}}.
\end{eqnarray*}

Writing $G$ and $G^{'}$ as their $\mathbb{BC}$-Taylor series expansion \cite[Page-208]{Luna6}, we get

\begin{eqnarray*}
\mbox{Area}(E_r)&=&\int_{0}^{2\pi}{( r_1e^{\mathbf{i}\theta_1}-\sum_{n_1=1}^{\infty}{n_1B_{n_1}r^{-n_1}e^{-\mathbf{i}n_1\theta_1}) ( r_1e^{\mathbf{i}\theta_1}}-\sum_{m_1=0}^{\infty}{m_1B_{m_1}r_1^{-m_1}e^{-\mathbf{i}m_1\theta_1}) d\theta_1}\mathbf{e}_{1}}\\
&&+\int_{0}^{2\pi}{( r_2e^{\mathbf{i}\theta_2}-\sum_{n_2=1}^{\infty}{n_2B_{n_2}r^{-n_2}e^{-\mathbf{i}n_2\theta_2}) ( r_2e^{\mathbf{i}\theta_2}}-\sum_{m_2=0}^{\infty}{m_2B_{m_2}r_2^{-m_2}e^{-\mathbf{i}m_2\theta_2}) d\theta_2}\mathbf{e}_{2}}\\
&=&\sum_{l=1}^{2}\left({\int_{0}^{2\pi}{ ( r_le^{\mathbf{i}\theta_l}-\sum_{n_l=1}^{\infty}{n_lB_{n_l}r_l^{-n_l}e^{-\mathbf{i}n_l\theta_l})(r_le^{\mathbf{i}\theta_l}}-\sum_{m_l=0}^{\infty}{m_lB_{m_l}r_l^{-m_l}e^{-\mathbf{i}m_l\theta_l}) d\theta_l\mathbf{e}_{l}}}}\right). 
\end{eqnarray*}

Using Area theorem \cite[Theorem 14.13, Page-286]{Rudi2}, we have,
$$\sum_{n=1}^{m_l}{n_l|B_{n}|^{2}r_l^{2n_l}< r_1^{2}\mathbf{e}_{l}},~~\mbox{where}~~l=1,2.$$
Finally, letting $r_1,r_1\rightarrow 1$ and $m_1,m_2\rightarrow 0$, we obtain
$$\sum_{n=1}^{\infty}n_1|B_{n_1}|^{2}\mathbf{e}_{1}+\sum_{n=1}^{\infty}n_2|B_{n_2}|^{2}\mathbf{e}_{2}\preceq 1.$$
Thus,
$$\sum_{n=1}^{\infty}n|B_n|_{\mathbf{k}}^{2}\preceq 1.$$
\end{proof}

\begin{corollary}\label{Coro1}
If $G\in\Sigma$, then $|B_1|_{\mathbf{k}}\preceq 1$ with equality if and only if $G$ has the form
$$G(Z)=Z+B_{0}+\frac{B_{1}}{Z},~~ |B_1|_{\mathbf{k}} = 1,~where~Z\notin\mathcal{O}_0.$$
\end{corollary}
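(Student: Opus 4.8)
The plan is to read off everything from the bicomplex Area Theorem (Theorem \ref{Thrm1}) together with the componentwise structure of the $\mathbb{D}$-valued norm. Write $G = G_1\mathbf{e_1} + G_2\mathbf{e_2}$ and $B_n = B_{n,1}\mathbf{e_1} + B_{n,2}\mathbf{e_2}$. Since $|B_n|_{\mathbf{k}} = |B_{n,1}|\mathbf{e_1} + |B_{n,2}|\mathbf{e_2}$ and $|B_n|_{\mathbf{k}}^2 = |B_{n,1}|^2\mathbf{e_1} + |B_{n,2}|^2\mathbf{e_2}$ (using $|ZW|_{\mathbf{k}} = |Z|_{\mathbf{k}}\cdot|W|_{\mathbf{k}}$ from the preliminaries), and since the order $\preceq$ on $\mathbb{D}$ is the product order under $a\mathbf{e_1} + b\mathbf{e_2}\mapsto(a,b)$, the conclusion $\sum_{n=1}^{\infty} n|B_n|_{\mathbf{k}}^2 \preceq 1$ of Theorem \ref{Thrm1} is equivalent to the pair of scalar inequalities $\sum_{n=1}^{\infty} n|B_{n,l}|^2 \le 1$ for $l = 1,2$. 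Keeping only the $n=1$ term gives $|B_{1,l}|^2 \le 1$, hence $|B_{1,l}| \le 1$ for $l=1,2$, which is precisely $|B_1|_{\mathbf{k}} \preceq 1$.

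For the equality assertion I would argue both directions. If $G(Z) = Z + B_0 + B_1 Z^{-1}$ with $|B_1|_{\mathbf{k}} = 1$, then $B_n = 0$ for all $n \ge 2$, so the series in Theorem \ref{Thrm1} collapses to $|B_1|_{\mathbf{k}}^2 = 1$, and the bound $|B_1|_{\mathbf{k}} \preceq 1$ is attained. Conversely, suppose $|B_1|_{\mathbf{k}} = 1$, i.e. $|B_{1,1}| = |B_{1,2}| = 1$. Substituting into $\sum_{n=1}^{\infty} n|B_{n,l}|^2 \le 1$ forces $\sum_{n \ge 2} n|B_{n,l}|^2 \le 0$, so $B_{n,l} = 0$ for every $n \ge 2$ and $l = 1,2$, i.e. $B_n = 0$ for all $n \ge 2$. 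Hence $G_l(\beta_l) = \beta_l + B_{0,l} + B_{1,l}\beta_l^{-1}$ on the punctured exterior disk for $l = 1,2$, and recombining the idempotent components yields $G(Z) = Z + B_0 + B_1 Z^{-1}$ with $|B_1|_{\mathbf{k}} = 1$; note $Z^{-1}$ is well defined here because $Z \notin \mathcal{O}_0$.

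The only point that needs care is the reduction itself: one must make sure that \emph{both} the inequality ``$\preceq 1$'' and the equality case genuinely split into the two $\mathbb{C}(\mathbf{i})$-components with no loss, which is exactly the observation that $|\cdot|_{\mathbf{k}}$ is defined slotwise and that $x \preceq y$ in $\mathbb{D}$ iff the inequality holds in each slot. Once that is recorded, the argument in each slot is the classical corollary to the area theorem (already available through the scalar Area Theorem cited in the proof of Theorem \ref{Thrm1}), so no new estimate is required — the work is purely the bookkeeping of the two-component structure and the verification that $1/Z$ makes sense off the null cone. I do not expect any serious obstacle here.
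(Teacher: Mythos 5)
Your argument is correct and follows essentially the same route as the paper: both deduce $|B_1|_{\mathbf{k}}\preceq 1$ by isolating the $n=1$ term of the Area Theorem bound $\sum_{n\ge 1} n|B_n|_{\mathbf{k}}^2\preceq 1$, and both obtain the equality case by observing that $|B_1|_{\mathbf{k}}=1$ forces $B_n=0$ for all $n\ge 2$. The only difference is that you spell out the idempotent slot-wise bookkeeping that the paper leaves implicit, which if anything makes the ``necessarily $B_n=0$'' step more convincing.
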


\begin{proof}
If $G\in\Sigma$, by Theorem \ref{Thrm1}, we have
$$\sum_{n=1}^{\infty}{n|B_{n}|_{\mathbf{k}}^{2}\preceq 1}.$$
If follows that, for every $n \geq 1,~ n|B_{n}|_{\mathbf{k}}^{2}\preceq 1.$ In particular, for $n=1,~ |B_{1}|_{\mathbf{k}}^{2}\preceq 1$ and so $|B_{1}|_{\mathbf{k}}\preceq 1.$\\
On the other hand, if $|B_1|_{\mathbf{k}} = 1,$ then necessarily $B_n = 0$ for all $n > 1.$ Therefore,
$G$ has the form
$$G(Z) = Z + B_0 + B_1Z^{-1}, ~\mbox{with}~ |B_1|_{\mathbf{k}} = 1.$$
Converse part directly follows from definition of $G$.
\end{proof}

\begin{definition}\cite[Definition 15, Page-5]{ghos}
Let $A, B, C, D \in \mathbb{BC}$ and A, B, C, D, $AD-BC \notin\mathcal{O}_{0}$. Then the mapping $F : \overline{\mathbb{BC}}\rightarrow\overline{\mathbb{BC}}$ defined by
\begin{equation}\label{EQ11}
F (W) = \frac{AZ + B}{CZ + D},~where~CZ+D\notin\mathcal{O}_0
\end{equation}
with the additional agreement that $F (\infty) = \frac{A}{C}$ and $F\left(\frac{-D}{C}\right)= \infty$ is called a bicomplex Mobius transformation or bicomplex Mobius map.
\end{definition}

\begin{remark}
Denoting $Z =\beta_1 \mathbf{e}_{1}+\beta_2\mathbf{e}_{2},~ A=A_{1}\mathbf{e}_{1}+A_{2}\mathbf{e}_{2},~ B=B_{1}\mathbf{e}_{1}+B_{2}\mathbf{e}_{2},~ C=C_{1}\mathbf{e}_{1}+C_{2}\mathbf{e}_{2},~ D=D_{1}\mathbf{e}_{1}+D_{2}\mathbf{e}_{2}$ and rewriting (\ref{EQ11}) in the idempotent form we get:
\begin{eqnarray*}
F (\beta_1 \mathbf{e}_{1}+\beta_2\mathbf{e}_{2}) &=& \left(\frac{A_{1}\beta_{1} + B_{1}}{C_{1}\beta_{1} + D_{1}}\right)\mathbf{e}_{1}+\left(\frac{A_{2}\beta_{2} + B_{2}}{C_{2}\beta_{2} + D_{2}}\right)\mathbf{e}_{2}\\
&=&\left(\frac{A_{1}}{C_{1}} + \frac{B_{1}C_{1}-A_{1}D_{1}}{C^{2}_{1}\left(\beta_{1}+\frac{D_{1}}{C_{1}}\right)}\right)\mathbf{e}_{1}+\left(\frac{A_{2}}{C_{2}} + \frac{B_{2}C_{2}-A_{2}D_{2}}{C^{2}_{2}\left(\beta_{2}+\frac{D_{2}}{C_{2}}\right)}\right)\mathbf{e}_{2}.
\end{eqnarray*}
\begin{itemize}
\item[$\bullet$] If $C_{1}=0$ and $C_{2}=0$, then
$F (\beta_1 \mathbf{e}_{1}+\beta_2\mathbf{e}_{2})$ is a $\mathbb{BC}-$linear function.
\item[$\bullet$] If $C_{1}\neq 0$ and $C_{2}=0$, then\\
$F (\beta_1 \mathbf{e}_{1}+\beta_2\mathbf{e}_{2})=\left(\frac{A_{1}}{C_{1}} + \frac{B_{1}C_{1}-A_{1}D_{1}}{C^{2}_{1}\left(\beta_{1}+\frac{D_{1}}{C_{1}}\right)}\right)\mathbf{e}_{1}+(\mathbb{C}(\mathbf{i})-$linear$)\mathbf{e}_{2}$.\\
$F\left(-\frac{D_{1}}{C_{1}}\mathbf{e}_{1}\right)=\infty\mathbf{e}_{1}$\\
also $F\left(\infty\mathbf{e}_{1}\right)=\frac{A_{1}}{C_{1}}\mathbf{e}_{1}$.
\item[$\bullet$] If $C_{1}= 0$ and $C_{2}\neq0$, then\\
$F (\beta_1 \mathbf{e}_{1}+\beta_2\mathbf{e}_{2})=(\mathbb{C}(\mathbf{i})-$linear$)\mathbf{e}_{1}$  $+\left(\frac{A_{2}}{C_{2}} + \frac{B_{2}C_{2}-A_{2}D_{2}}{C^{2}_{2}\left(\beta_{2}+\frac{D_{2}}{C_{2}}\right)}\right)\mathbf{e}_{2}$.\\
$F\left(-\frac{D_{2}}{C_{2}}\mathbf{e}_{2}\right)=\infty\mathbf{e}_{2}$\\
also $F\left(\infty\mathbf{e}_{2}\right)=\frac{A_{2}}{C_{2}}\mathbf{e}_{2}$.
\item[$\bullet$] If $C_{1}\neq 0$ and $C_{2}\neq 0$, then\\
$F (\beta_1 \mathbf{e}_{1}+\beta_2\mathbf{e}_{2})=\left(\frac{A_{1}}{C_{1}} + \frac{B_{1}C_{1}-A_{1}D_{1}}{C^{2}_{1}\left(\beta_{1}+\frac{D_{1}}{C_{1}}\right)}\right)\mathbf{e}_{1}+\left(\frac{A_{2}}{C_{2}} + \frac{B_{2}C_{2}-A_{2}D_{2}}{C^{2}_{2}\left(\beta_{2}+\frac{D_{2}}{C_{2}}\right)}\right)\mathbf{e}_{2}.$\\
$F\left(-\frac{D_{1}}{C_{1}}\mathbf{e}_{1}-\frac{D_{2}}{C_{2}}\mathbf{e}_{2}\right)=\infty\mathbf{e}_{1}+\infty\mathbf{e}_{2}$\\
also $F\left(\infty\mathbf{e}_{1}+\infty\mathbf{e}_{2}\right)=\frac{A_{1}}{C_{1}}\mathbf{e}_{1}+\frac{A_{2}}{C_{2}}\mathbf{e}_{2}$.
\end{itemize}
\end{remark}

Now we prove bicomplex Bieberbach’s Theorem.
\begin{theorem}\label{Thrm2}
If $F \in \mathcal{F}$, then $|A_2|_{\mathbf{k}} \preceq 2$, with equality if and
only if $F$ is a rotation of the $\mathbb{BC}$-Koebe function.
\end{theorem}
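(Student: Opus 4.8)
The plan is to mimic the classical Bieberbach argument — square-root transform, then inversion, then the area inequality — carried out throughout in the idempotent decomposition, with Corollary \ref{Coro1} doing the analytic work. First I would write $F$ in its product-type form $F = F_1\mathbf{e_1} + F_2\mathbf{e_2}$, where each $F_l : \mathbb{D} \to \mathbb{C}(\mathbf{i})$ is holomorphic, univalent, and normalized by $F_l(0)=0$, $F_l'(0)=1$; correspondingly $F(Z) = Z + \sum_{n\ge 2} A_n Z^n$ with $A_n = a_n^{(1)}\mathbf{e_1} + a_n^{(2)}\mathbf{e_2}$, so that $|A_n|_{\mathbf{k}} = |a_n^{(1)}|\mathbf{e_1} + |a_n^{(2)}|\mathbf{e_2}$. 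Everything then reduces to controlling $|a_2^{(1)}|$ and $|a_2^{(2)}|$ simultaneously.

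Next I would construct the bicomplex square-root transform. Since $F(Z^2)$ is product-type and vanishes to order two at $0$, one defines $\psi$ componentwise by $\psi_l(w) = \sqrt{F_l(w^2)}$, choosing the branch with $\psi_l'(0)=1$; each $\psi_l$ is an odd, holomorphic, univalent function on $\mathbb{D}$ with $\psi_l(w) = w + \tfrac{1}{2}a_2^{(l)}w^3 + \cdots$, so the assembled $\psi = \psi_1\mathbf{e_1} + \psi_2\mathbf{e_2}$ is $\mathbb{BC}$-holomorphic, $\mathbb{BC}$-univalent, odd, and satisfies $\psi(Z) = Z + \tfrac{1}{2}A_2 Z^3 + \cdots$. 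I then set $G(Z) = 1/\psi(1/Z)$ for $Z \in \Delta$; componentwise $G_l(w) = 1/\psi_l(1/w)$ is the classical inversion, so $G$ is product-type, $\mathbb{BC}$-holomorphic and $\mathbb{BC}$-univalent on $\Delta$, and expanding gives $G(Z) = Z - \tfrac{1}{2}A_2 Z^{-1} + \cdots$, i.e. $G \in \Sigma$ with $B_1 = -\tfrac{1}{2}A_2$. Applying Corollary \ref{Coro1} yields $|B_1|_{\mathbf{k}} \preceq 1$, hence $\tfrac{1}{2}|A_2|_{\mathbf{k}} \preceq 1$, that is $|A_2|_{\mathbf{k}} \preceq 2$.

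For the equality case, $|A_2|_{\mathbf{k}} = 2$ forces $|B_1|_{\mathbf{k}} = 1$, so by Corollary \ref{Coro1} $G$ has the form $G(Z) = Z + B_0 + B_1 Z^{-1}$ with $|B_1|_{\mathbf{k}} = 1$. Reading this off componentwise, each $G_l$ is the classical extremal map; unwinding the inversion and the square-root transform identifies $F_l$ with a rotation $\lambda_l^{-1}k(\lambda_l w)$ of the Koebe function $k(w) = w/(1-w)^2$ for some $|\lambda_l|=1$. Assembling the two components, $F(Z) = \Lambda^{-1}K(\Lambda Z)$ with $\Lambda = \lambda_1\mathbf{e_1} + \lambda_2\mathbf{e_2}$, $|\Lambda|_{\mathbf{k}} = 1$, and $K(Z) = Z/(1-Z)^2$ the $\mathbb{BC}$-Koebe function; so $F$ is a rotation of the $\mathbb{BC}$-Koebe function, and the converse is a direct verification that such an $F$ lies in $\mathcal{F}$ with $|A_2|_{\mathbf{k}} = 2$.

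The main obstacle I anticipate is the rigorous justification that the square-root transform produces a genuine $\mathbb{BC}$-holomorphic, $\mathbb{BC}$-univalent, product-type function: one must check that the branch of the square root can be chosen $\mathbb{BC}$-holomorphically — which the idempotent splitting makes transparent, since it reduces to two independent classical branch choices — that oddness and the normalization $\psi'(0)=1$ survive, and that the coefficient bookkeeping through both transforms is exact so that $B_1 = -A_2/2$. The equality analysis is then essentially a componentwise translation of the classical rigidity statement, together with the observation that a bicomplex "rotation" is precisely a pair of independent unimodular complex rotations glued by $\mathbf{e_1}$ and $\mathbf{e_2}$, so that the two extremal components automatically fit into a single $\mathbb{BC}$-rotation of $K$.
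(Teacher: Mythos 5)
Your proposal is correct and follows essentially the same route as the paper: square-root transform, inversion into $\Sigma$, Corollary \ref{Coro1} for the bound, and unwinding the transforms for the equality case. The only difference is presentational — you justify each step componentwise via the idempotent decomposition, while the paper manipulates the bicomplex series directly and cites the square-root transformation result from the literature — but the argument is the same.
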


\begin{proof}
Given $F(Z) = Z + \sum_{n=2}^{\infty}{A_{n}Z^{n}}$, we construct the following auxiliary functions
$$G(Z) = \sqrt{F\left(Z^{2}\right)} ~\mbox{and}~ H(Z) = \frac{1}{G\left(\frac{1}{Z}\right)}~\mbox{such~that}~Z,G\left(\frac{1}{Z}\right)\notin\mathcal{O}_{0}.$$
Since, $G(Z)$ is a square root transformation of $F$ \cite[Theorem 3.7, Page-14]{Arif2}, so $G \in \mathcal{F}$. Now write it's coefficients in terms of the coefficients of $F$.
Let $G\left(Z\right) = Z + B_{3}Z^{3} + B_{5}Z^{5} + \cdot\cdot\cdot .$ We know that $G\left(Z\right)^{2} = F\left(Z^{2}\right)$. Grouping the coefficients by the power of $Z$, we obtain:\\
$n = 4 : A_{2} = 2B_{3} \Rightarrow B_{3} =\frac{A_2}{2}$\\
$n = 6 : A_{3} = 2B_{5} + B^{2}_{3} \Rightarrow B_{5} =\frac{1}{2}\left(A_3 - \frac{A^2_2}{4}\right)$\\
$\cdot\cdot\cdot.$\\
Thus, we re-write $G(Z)$ as\\
$$G(Z) = Z +\frac{A_3}{2}Z^3 +\frac{1}{2}\left(A_3 -\frac{A_2^2}{4}\right)Z^5 + \cdot\cdot\cdot.$$
The function $H(Z)$ is $\mathbb{BC}$-univalent, and also study its $\mathbb{BC}$-Laurent series expansion.\\
We compute: $G\left(\frac{1}{Z}\right)=\frac{1}{Z}+\frac{B_3}{Z^3} +\frac{B_5}{Z^5} + \cdot\cdot\cdot.$ Observe that, as $G \in \mathcal{F}, G\left(\frac{1}{Z}\right)$ is $\mathbb{BC}$-holomorphic and $\mathbb{BC}$-univalent for $|Z|_{\mathbf{k}} \succ 1.$ Consequently, $H(Z) = \frac{1} {G\left(\frac{1}{Z}\right)}$ is $\mathbb{BC}$-univalent and
$\mathbb{BC}$-holomorphic in $\Delta$ and, moreover, $H(\infty) = \infty.$ Therefore, $H \in \Sigma$, and has the form
$$H(Z) = Z + C_0 +\frac{C_1}{Z}+\frac{C_2}{Z^2} + \cdot\cdot\cdot.$$
Let's compute the coefficients of $H(Z)$ in terms of the $A_n$.\\
Since $H(Z) = \frac{1}{G\left(\frac{1}{Z}\right)}$, we have $G\left(\frac{1}{Z}\right)H(Z) = 1$. Writing $\mathbb{BC}$-Taylor's development of the functions, we get
$$1 =\left(\frac{1}{Z}+\frac{B_3}{Z^3} +\frac{B_5}{Z^5} + \cdot\cdot\cdot\right)\left( Z + C_0 +\frac{C_1}{Z}+\frac{C_2}{Z^2} + \cdot\cdot\cdot \right).$$
Now, multiplying and grouping the coefficients by the power of $Z$, and equating to zero, we get:\\
$n = -1 : C_0 = 0$\\
$n = -2 : C_1 + B_3 = 0 \Rightarrow C_1 = -B_3 = -\frac{A_2}{2}$\\
$n = -3 : C_2 + B_3C_0 = 0 \Rightarrow C_2 = 0$\\
$\cdot\cdot\cdot$\\
Finally, we obtain
$$H(Z) = Z -\frac{A_2}{2Z}+ \cdot\cdot\cdot.$$
As $H \in \Sigma,$ by Corollary \ref{Coro1}, we have that $\left|\frac{A_2}{2}\right|_{\mathbf{k}} \preceq 1$, that is, $\left|A_2\right|_{\mathbf{k}} \preceq 2$, with equality holds if and only if 
$$H(Z) = Z +\frac{B}{Z}, \left|B\right|_{\mathbf{k}} = 1.$$
It now remains to be seen that this is equivalent to $F$, being a rotation of the $\mathbb{BC}$-Koebe function.\\
Now \\
$H(Z) = \frac{1}{G\left(\frac{1}{Z}\right)}$ is equivalent to $G(Z) = \frac{1}{H\left(\frac{1}{Z}\right)}, Z \in \mathbb{D}_{\mathbb{BC}}.$\\
So, we have\\
$H(Z) = Z +\frac{B}{Z}$ if and only if $H\left(\frac{1}{Z}\right) = \frac{1}{Z}+ BZ.$\\
Thus,\\
$G(Z) = \frac{1}{\frac{1}{Z} + BZ},$ that is, $G(Z) = \frac{Z}{1 + BZ^2}.$\\
As $G(Z) = \sqrt{ F(Z^2)}$, we have 
$$G(Z)^2 =\frac{Z^2}{(1 + BZ^2)^2} = F(Z^2).$$ 
With the change of variable $t = Z^2,$ we obtain
$$F(t) = \frac{t}{(1 + Bt)^2}, \left|B\right|_{\mathbf{k}} = 1, t \in \mathbb{D}_{\mathbb{BC}}$$
which is a rotation of the $\mathbb{BC}$-Koebe function.
\end{proof}

Now we establish bicomplex version of Koebe Quarter Theorem.
\begin{theorem}
The range of every $\mathbb{BC}$-univalent $\mathbb{BC}$-holomorphic function $F:\mathbb{D}_{\mathbb{BC}}\rightarrow\mathbb{BC}$ from bicomplex unit disk $\mathbb{D}_{\mathbb{BC}}$ onto a subset of the bicomplex plane contains the bicomplex disk $\mathbb{D}(0, \frac{1}{4})$.
\end{theorem}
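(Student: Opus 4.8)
The plan is to pull everything back to the two complex coordinates through the idempotent decomposition and then run the classical ``Koebe from Bieberbach'' argument in each coordinate. First I would record a normalization. Writing $F=F_1\mathbf{e_1}+F_2\mathbf{e_2}$, each $F_j:\mathbb{D}\to\mathbb{C}(\mathbf{i})$ is holomorphic and injective (this uses only that $F$ is product-type and $\mathbb{BC}$-univalent; see the last paragraph), hence $F_j'(0)\ne 0$, so $F'(0)=F_1'(0)\mathbf{e_1}+F_2'(0)\mathbf{e_2}\notin\mathcal{O}_0$; replacing $F$ by $Z\mapsto\big(F(Z)-F(0)\big)\,F'(0)^{-1}$ only translates and rescales $F(\mathbb{D}_{\mathbb{BC}})$, so we may assume $F\in\mathcal{F}$. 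Since $F$ is product-type, $F(\mathbb{D}_{\mathbb{BC}})=F_1(\mathbb{D})\mathbf{e_1}+F_2(\mathbb{D})\mathbf{e_2}$, and by the definition of the bicomplex disk $\mathbb{D}(0,\tfrac14)=\{Z:|Z|_{\mathbf{k}}\prec\tfrac14\}=\mathbb{D}(0,\tfrac14)\mathbf{e_1}+\mathbb{D}(0,\tfrac14)\mathbf{e_2}$ is itself a product of two complex disks. Because $\mathbb{BC}_{\mathbf{e_1}}\cap\mathbb{BC}_{\mathbf{e_2}}=\{0\}$, the desired inclusion $\mathbb{D}(0,\tfrac14)\subseteq F(\mathbb{D}_{\mathbb{BC}})$ is then equivalent to the two separate complex inclusions $\mathbb{D}(0,\tfrac14)\subseteq F_j(\mathbb{D})$, $j=1,2$.

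For each fixed $j$ I would argue as in the one-variable proof. Suppose $w\in\mathbb{C}(\mathbf{i})\setminus\{0\}$ is a value omitted by $F_j$, and form the M\"obius-type auxiliary function $g(\zeta)=\frac{w\,F_j(\zeta)}{w-F_j(\zeta)}$, which is well defined (its denominator never vanishes), holomorphic, injective on $\mathbb{D}$, and normalized with Taylor expansion $g(\zeta)=\zeta+\big(a_2^{(j)}+\tfrac1w\big)\zeta^2+\cdots$, where $a_2^{(j)}$ denotes the second Taylor coefficient of $F_j$. Reading Theorem \ref{Thrm2} in the $\mathbf{e_j}$-component --- where the $|\cdot|_{\mathbf{k}}$-estimate $\preceq 2$ collapses to the ordinary inequality $|\cdot|\le 2$, equivalently the classical Bieberbach inequality --- and applying it to both $F_j$ and $g$ yields $|a_2^{(j)}|\le 2$ and $\big|a_2^{(j)}+\tfrac1w\big|\le 2$, whence $\big|\tfrac1w\big|\le 4$ by the triangle inequality, i.e.\ $|w|\ge\tfrac14$. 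Hence every value omitted by $F_j$ lies outside $\mathbb{D}(0,\tfrac14)$, which is precisely $\mathbb{D}(0,\tfrac14)\subseteq F_j(\mathbb{D})$. Recombining via $F(\mathbb{D}_{\mathbb{BC}})=F_1(\mathbb{D})\mathbf{e_1}+F_2(\mathbb{D})\mathbf{e_2}$ completes the argument.

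The step I expect to be the main obstacle is carrying out the reduction to components correctly, and in particular resisting the temptation to run the M\"obius trick directly at the bicomplex level: for an omitted $W=w_1\mathbf{e_1}+w_2\mathbf{e_2}$ the map $Z\mapsto\frac{W\,F(Z)}{W-F(Z)}$ need not be $\mathbb{BC}$-holomorphic throughout $\mathbb{D}_{\mathbb{BC}}$, since $W-F(Z)$ may be a nonzero zero-divisor --- this happens exactly when one of $w_1\in F_1(\mathbb{D})$, $w_2\in F_2(\mathbb{D})$ holds while the other fails, because the complement of a product-type set is not product-type. Passing to the components removes this difficulty entirely. The one point there that must be verified rather than merely asserted is that a product-type $\mathbb{BC}$-univalent $F$ restricts to honestly univalent $F_1,F_2$: if $F_1(\beta_1)=F_1(\beta_1')$ with $\beta_1\ne\beta_1'$, then for any $\beta_2\in\mathbb{D}$ we get $F(\beta_1\mathbf{e_1}+\beta_2\mathbf{e_2})=F(\beta_1'\mathbf{e_1}+\beta_2\mathbf{e_2})$ with distinct arguments, contradicting $\mathbb{BC}$-univalence; the same for $F_2$. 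This also justifies $F'(0)\notin\mathcal{O}_0$ used in the normalization step.
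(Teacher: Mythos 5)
Your argument is correct, and it takes a genuinely different route from the paper's. The paper runs the M\"obius trick directly at the bicomplex level: for an omitted value $W\notin\mathcal{O}_0$ it forms $G(Z)=\frac{WF(Z)}{W-F(Z)}$, checks $G\in\mathcal{F}$, computes $G''(0)=F''(0)+\frac{2}{W}$, and applies the bicomplex Bieberbach theorem (Theorem \ref{Thrm2}) together with the triangle inequality to get $|W|_{\mathbf{k}}\succeq\frac14$. You instead split into the two idempotent components and run the classical one-variable Koebe argument in each, recombining at the end via the fact that both $\mathbb{D}(0,\frac14)$ and $F(\mathbb{D}_{\mathbb{BC}})$ are product-type sets. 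The two proofs are formally parallel --- both hinge on Bieberbach plus the triangle inequality --- but yours is the more robust, for exactly the reason you flag: the complement of a product-type set is not product-type, so $F$ omits values $W$ that are zero divisors, and invertible values $W$ for which $W-F(Z)$ meets the null cone at some $Z\in\mathbb{D}_{\mathbb{BC}}$ (one component of $W$ attained, the other not); the paper's standing hypothesis $W-F(Z)\notin\mathcal{O}_0$ silently excludes all of these, whereas your componentwise reduction covers every omitted value and thus actually closes that gap. Your verification that a product-type $\mathbb{BC}$-univalent map has univalent components is also a point the paper leaves implicit. One caveat on your normalization step: as literally stated the theorem needs $F\in\mathcal{F}$ (the paper's own proof assumes this), so your reduction ``we may assume $F\in\mathcal{F}$'' really proves that the range contains $F(0)+F'(0)\cdot\mathbb{D}(0,\frac14)$; that is the correct reading of the statement, but it is worth saying explicitly.
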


\begin{proof} 
Let $W \in \mathbb{BC}\backslash\mathcal{O}_{0}$ be a point. Let $F \in \mathcal{F}$ be a function that omits the value $W$, and consider the function
$$G(Z) = \frac{W F(Z)}{W - F(Z)},~\mbox{where}~W-F(Z)\notin\mathcal{O}_0.$$
Firstly, we check that $G$ is $\mathbb{BC}$-univalent and $\mathbb{BC}$-holomorphic in $\mathbb{D}_{\mathbb{BC}}$. We can write
$$G(Z) = (H \circ F)(Z),~ \mbox{where} ~H(Z) = \frac{WZ}{W - Z}~\mbox{and~also}~W-Z\notin\mathcal{O}_0.$$
Since $F$ is $\mathbb{BC}$-univalent and $\mathbb{BC}$-holomorphic in $\mathbb{D}_{\mathbb{BC}}$, it only needs to be check that $H(Z)$ is $\mathbb{BC}$-univalence and $\mathbb{BC}$-holomorphic. Suppose that $H(Z_1) = H(Z_2).$ So,
$$\frac{WZ_1}{W - Z_1}=\frac{WZ_2}{W - Z_2}.$$
Then,
$$WZ_1 - Z_1Z_2 = Z_2W - Z_1Z_2 ~\mbox{and ~hence},~ Z_1 = Z_2.$$
Clearly, $H$ is $\mathbb{BC}$-holomorphic for every $Z \neq W.$ Then, by \cite[Theorem 2, Page-5]{ghos}, $G$ is $\mathbb{BC}$-holomorphic and $\mathbb{BC}$-univalent in $\mathbb{D}_{\mathbb{BC}}$. Now, we have $G(Z)\in\mathcal{F}$, because if\\
$G(Z) = \frac{W F(Z)}{W - F(Z)}~\mbox{then}~ G(0) = 0$,\\
$G'(Z) = \frac{W^2F'(Z)}{(W - F(Z))^2} ~\mbox{then}~ W'(0) = 1$ and\\
$G''(Z) = \frac{W^2(F''(Z)(W - F(Z)) + 2 F'(Z)^2)}{(W - F(Z))^3}$ implies $G''(0) = F''(0) + \frac{2}{W}$, where $W\notin\mathcal{O}_{0}$. Thus
$$G(Z) = Z +\left(A_2 +\frac{1}{W}\right)Z^2 + \cdot\cdot\cdot.$$
Since, $G(Z)\in\mathcal{F}$, by Theorem \ref{Thrm2} we have,
$$\left|A_2 +\frac{1}{W}\right|_{\mathbf{k}} \preceq 2.$$
Now, using the triangle inequality we get
$$\left|\frac{1}{W}\right|_{\mathbf{k}} \preceq \left|\frac{1}{W}+ A_2\right|_{\mathbf{k}}+ |A_2|_{\mathbf{k}} \preceq 2 + 2 = 4, \mbox{~and ~therefore~} |W|_{\mathbf{k}} \succeq \frac{1}{4}.$$
Now in view of Theorem \ref{Thrm2}, $|W|_{\mathbf{k}} = \frac{1}{4}$ holds if and only if $G$ is a rotation of $\mathbb{BC}$-Koebe function and by the definition of $G$ this holds if and only if $F$ is a rotation of the $\mathbb{BC}$-Koebe function.
\end{proof}

\begin{remark}
	Since holomorphic functions have idempotent representation, it is easy to  prove that the Bieberbach's conjecture  for bicomplex scalers is true if and only if it is true for complex scalers.  
\end{remark}

\section{Bicomplex Mergelyan's Theorem}
In this section we prove the bicomplex version of  Mergelyan's approximation theorem. First of all we fix some notations.  Let us denote the space of all  bicomplex valued functions which are  $\mathbb{BC}$-holomorphic  in the  interior of  product type compact set $K = K_1 \times K_2  $ and are  $\mathbb{BC}$-continuous on   $K = K_1 \times K_2 $,  where $K_1 ~ and ~ K_2 $  are  domains in $\mathbb{C}(\mathbf{i})$ as,   
$$ \mathcal{A}(K) = \mathcal{A}({K}_1) \times \mathcal{A}({K}_2) = \mathcal{A}({K}_1) \mathbf{e_1} + \mathcal{A}({K}_2)\mathbf{e_2},  $$
where $\mathcal{A}({K}_1)$ and  $\mathcal{A}({K}_2)$ are space of complex valued functions which are  holomorphic in   the interior of ${K}_1 ~ and ~ {K}_2 $ and continuous on  ${K}_1 ~ and ~ {K}_2 $ respectively.
Also the product-type compact subset $K \subset \mathbb{BC}$ can be written as,
 $$K= K_1 \times K_2 = {K}_1 \mathbf{e_1} + {K}_2 \mathbf{e_2},~~\mbox{where}~~ K_1, K_2 \subset \mathbb{C}(\mathbf{i}).$$
 If ${K}_1$ is finitely connected and  ${K}_2$ is simply connected then, $K= K_1 \times K_2$ is finitely connected as shown in the Fig. \ref{fig:1}.
 \begin{figure}[h]
 	\centering
 	\includegraphics[width=12cm, height=5cm]{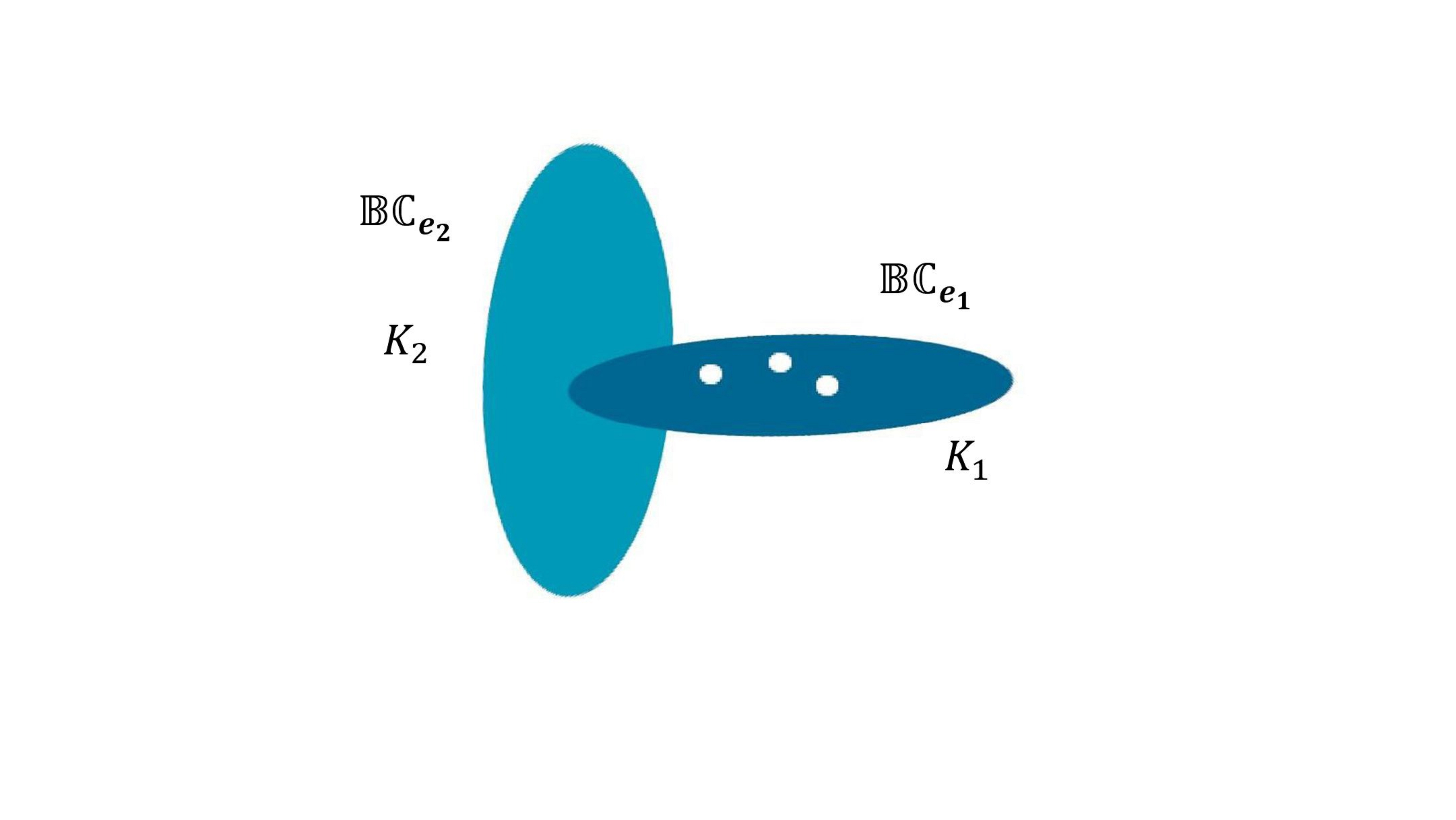}
 	\caption{A section of $K= K_1 \times K_2$ }
 	\label{fig:1}
 \end{figure}

If $K_1$ is compact subset of $\mathbb{C}(\mathbf{i})$, then $\mathbb{C}(\mathbf{i})\setminus K_1$ can be connected or have finitely many components or  infinitely many components. For a compact set $K= K_1 \times K_2 \subset \mathbb{BC}$, we have,

 $$  \mathbb{BC} \setminus K = (\mathbb{C}(\mathbf{i})\setminus K_1) \times (\mathbb{C}(\mathbf{i})\setminus K_2). $$
 Then we have following four possibilities for  $  \mathbb{BC} \setminus K $:
 \begin{itemize}
 	 \item[(i)] If both  ($\mathbb{C}(\mathbf{i})\setminus K_1), (\mathbb{C}(\mathbf{i})\setminus K_2)$ have finitely many  or infinitely many components, then $  \mathbb{BC} \setminus K $ have finitely many  or infinitely many components respectively. In that case let  
	$$ \mathcal{T}_1 =   \mathbb{BC} \setminus K = (\mathbb{C}(\mathbf{i})\setminus K_1) \times (\mathbb{C}(\mathbf{i})\setminus K_2) $$ 

  \item[(ii)] If  $\mathbb{C}(\mathbf{i})\setminus K_1$ have finitely many components or infinitely many components and  $ \mathbb{C}(\mathbf{i})\setminus K_2$ is  connected, then $  \mathbb{BC} \setminus K $ also have finitely many  or infinitely many components respectively and we denote it by $\mathcal{T}_2$  
	$$ \mathcal{T}_2 =   \mathbb{BC} \setminus K = (\mathbb{C}(\mathbf{i})\setminus K_1) \times (\mathbb{C}(\mathbf{i})\setminus K_2). $$ 
   \item[(iii)] If  $\mathbb{C}(\mathbf{i})\setminus K_1$  is  connected and  $ \mathbb{C}(\mathbf{i})\setminus K_2$ have finitely many or infinitely many components, then $  \mathbb{BC} \setminus K $ also have finitely many  or infinitely many components respectively and we denote it by $\mathcal{T}_3$
     $$ \mathcal{T}_3 =   \mathbb{BC} \setminus K = (\mathbb{C}(\mathbf{i})\setminus K_1) \times (\mathbb{C}(\mathbf{i})\setminus K_2).$$ 
   	\item[(iv)] If both  $(\mathbb{C}(\mathbf{i})\setminus K_1), (\mathbb{C}(\mathbf{i})\setminus K_2)$ are connected, then $  \mathbb{BC} \setminus K $ is also connected. In that case let  
		$$ \mathcal{T}_4 =   \mathbb{BC} \setminus K = (\mathbb{C}(\mathbf{i})\setminus K_1) \times (\mathbb{C}(\mathbf{i})\setminus K_2).$$ 
 \end{itemize}
Now we prove the bicomplex Mergelyan's Thoreom. For complex version of Mergelyan's Theorem we refer to \cite{Green} and \cite{Rudi2}.

\begin{theorem}\label{Mergelyan's} Let K be a product-type compact subset of $\mathbb{BC}$ and    $F(Z) \in \mathcal{A}(K)$.  
\begin{itemize}	
\item[(i)] If $  \mathbb{BC} \setminus K $  is of the type $\mathcal{T}_1$ having finitely many components and $P \subset \mathcal{T}_1$ contains one point from each connected component of $\mathcal{T}_1$, then $F(Z)$ can be approximated  on $K$  by a bicomplex rational function $R(Z)$ with bicomplex poles only in $P$ i.e.,
$$\sup_{z\in K} |F(Z) - R(Z)|_{\mathbf{k}} \prec \epsilon .$$
\item[(ii)] If $  \mathbb{BC} \setminus K $  is of the type $\mathcal{T}_2$ having finitely many components, then  $F(Z)$ can be approximated  on $K$  by a bicomplex rational function $R(Z) = R_1(Z_1)\mathbf{e_1} + P_2(Z_2)\mathbf{e_2} $, where $R_1(Z_1)$ is rational function in $\mathbb{C}(\mathbf{i})$ and $P_2(Z_2)$ is polynomial in $\mathbb{C}(\mathbf{i})$, with bicomplex  poles   $ \beta_1 \mathbf{e_1} + \infty\mathbf{e_2} \in \overline{\mathbb{BC}}\setminus( K \cup \mathcal{O}) $  and $\beta_1 \in (\mathbb{C}(\mathbf{i})\setminus K_1)$ such that,
$$\sup_{z\in K} |F(Z) - R(Z)|_{\mathbf{k}}\prec \epsilon .$$
\item[(iii)] If $  \mathbb{BC} \setminus K $  is of the type $\mathcal{T}_3$ having finitely many components, then  $F(Z)$ can be approximated  on $K$  by a bicomplex rational function $R(Z) = P_1(Z_1)\mathbf{e_1} + R_2(Z_2)\mathbf{e_2} $ where $P_1(Z_1)$ is polynomial in $\mathbb{C}(\mathbf{i})$ and $R_2(Z_2)$ is rational function in $\mathbb{C}(\mathbf{i})$, with bicomplex poles $ \infty\mathbf{e_1} + \beta_2\mathbf{e_2} \in \overline{\mathbb{BC}}\setminus( K \cup \mathcal{O})$ and $\beta_2 \in (\mathbb{C}(\mathbf{i})\setminus K_2)$ such that,
$$\sup_{z\in K} |F(Z) - R(Z)|_{\mathbf{k}} \prec \epsilon .$$

\item[(iv)] If $  \mathbb{BC} \setminus K $  is of the type $\mathcal{T}_4$, then $F(Z)$ can be approximated  on $K$  by a bicomplex polynomial $P(Z)$ with bicomplex pole $ (\infty\mathbf{e_1} + \infty\mathbf{e_2}) \in \overline{\mathbb{BC}}\setminus( K \cup \mathcal{O})$ such that,
$$\sup_{z\in K} |F(Z) - P(Z)|_{\mathbf{k}} \prec \epsilon .$$		
\end{itemize}
\end{theorem}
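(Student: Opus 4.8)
The plan is to exploit the idempotent decomposition, so that the whole statement collapses into two applications of the classical scalar Mergelyan theorem, one in each component. Write $K=K_1\mathbf{e_1}+K_2\mathbf{e_2}$ with $K_1,K_2\subset\mathbb{C}(\mathbf{i})$ compact; since by hypothesis $F\in\mathcal{A}(K)=\mathcal{A}(K_1)\mathbf{e_1}+\mathcal{A}(K_2)\mathbf{e_2}$, we may write $F(\beta_1\mathbf{e_1}+\beta_2\mathbf{e_2})=F_1(\beta_1)\mathbf{e_1}+F_2(\beta_2)\mathbf{e_2}$ with $F_i$ holomorphic on the interior of $K_i$ and continuous on $K_i$. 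Every product-type $\mathbb{BC}$-rational (respectively $\mathbb{BC}$-polynomial) function is of the form $R(\beta_1\mathbf{e_1}+\beta_2\mathbf{e_2})=R_1(\beta_1)\mathbf{e_1}+R_2(\beta_2)\mathbf{e_2}$ with $R_1,R_2$ complex rational (respectively polynomial). Since $|F-R|_{\mathbf{k}}=|F_1-R_1|\mathbf{e_1}+|F_2-R_2|\mathbf{e_2}$ and the order $\preceq$ on $\mathbb{D}$ is the coordinatewise one, writing $\epsilon=\epsilon_1\mathbf{e_1}+\epsilon_2\mathbf{e_2}$ with $\epsilon_1,\epsilon_2>0$, the inequality $\sup_{Z\in K}|F(Z)-R(Z)|_{\mathbf{k}}\prec\epsilon$ is equivalent to the two scalar estimates $\sup_{\beta_i\in K_i}|F_i(\beta_i)-R_i(\beta_i)|<\epsilon_i$, $i=1,2$. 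Hence it is enough to approximate each $F_i$ on $K_i$ and then reassemble.

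The second step is to feed this into the scalar theory of \cite{Green} and \cite{Rudi2}. If $\mathbb{C}(\mathbf{i})\setminus K_i$ is connected, Mergelyan's theorem gives a polynomial $P_i$ with $\sup_{K_i}|F_i-P_i|<\epsilon_i$. If $\mathbb{C}(\mathbf{i})\setminus K_i$ has finitely many components, the rational form of Mergelyan's theorem gives a rational $R_i$ with $\sup_{K_i}|F_i-R_i|<\epsilon_i$ whose poles may be placed one in each component of $\mathbb{C}(\mathbf{i})\setminus K_i$ (with a pole at $\infty$ allowed for the unbounded component, which makes the corresponding term a polynomial). Putting $R=R_1\mathbf{e_1}+R_2\mathbf{e_2}$ then produces, by the first step, a product-type $\mathbb{BC}$-rational function with the required bound.

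The third step is to match the pole bookkeeping in the four cases, using $\overline{\mathbb{BC}}=\mathbb{BC}\cup\{\infty\mathbf{e_1}+\mathbb{C}(\mathbf{i})\mathbf{e_2}\}\cup\{\mathbb{C}(\mathbf{i})\mathbf{e_1}+\infty\mathbf{e_2}\}\cup\{\infty\mathbf{e_1}+\infty\mathbf{e_2}\}$ together with the fact that, in idempotent coordinates, $\mathcal{O}_0=\{\beta_1=0\}\cup\{\beta_2=0\}$. In case (iv) both complements are connected, both $R_i=P_i$ are polynomials, $P=P_1\mathbf{e_1}+P_2\mathbf{e_2}$ is a $\mathbb{BC}$-polynomial, and its only pole is $\infty\mathbf{e_1}+\infty\mathbf{e_2}$. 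In case (ii), $R_1$ is rational with finite poles in $\mathbb{C}(\mathbf{i})\setminus K_1$ while $R_2=P_2$ is a polynomial, so the poles of $R$ have the form $\beta_1\mathbf{e_1}+\infty\mathbf{e_2}$ with $\beta_1\in\mathbb{C}(\mathbf{i})\setminus K_1$; case (iii) is symmetric with the roles of the indices exchanged. In case (i), one chooses the prescribed finite poles of $R_1$ among the first coordinates of the points of $P$ (at least one inside each component of $\mathbb{C}(\mathbf{i})\setminus K_1$) and those of $R_2$ among the second coordinates, so that every resulting bicomplex pole $\beta_1\mathbf{e_1}+\beta_2\mathbf{e_2}$ lies in $P$. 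In every case, each component of $\mathbb{C}(\mathbf{i})\setminus K_i$ is open and non-empty, hence not contained in $\{0\}$, so the representatives may be taken off $\mathcal{O}$; thus all poles lie in $\overline{\mathbb{BC}}\setminus(K\cup\mathcal{O})$.

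I expect the main obstacle to be this last bookkeeping rather than the analysis. One must first agree on what a ``bicomplex pole'' of $R_1(\beta_1)\mathbf{e_1}+R_2(\beta_2)\mathbf{e_2}$ is inside $\overline{\mathbb{BC}}$, since a single complex pole of $R_1$ already sends the $\mathbf{e_1}$-component to $\infty$ along an entire $\mathbb{C}(\mathbf{i})$-slice; the usable convention is to call the pole set of $R$ the finite collection of points $a\mathbf{e_1}+b\mathbf{e_2}$ with $a$ a pole of $R_1$ (possibly $a=\infty$) and $b$ a pole of $R_2$ (possibly $b=\infty$), and to read ``poles in $P$'', ``pole at $\beta_1\mathbf{e_1}+\infty\mathbf{e_2}$'', and so on, accordingly. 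With this convention fixed, and the representatives of the components matched up as above, the four conclusions fall out of the scalar theorems; the remaining ingredients — the idempotent splitting, the coordinatewise order on $\mathbb{D}$, the multiplicativity and continuity of $|\cdot|_{\mathbf{k}}$, and the reassembly of the two rational functions — are routine.
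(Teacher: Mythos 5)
Your proposal is correct and follows essentially the same route as the paper: split $F$ and the approximant into idempotent components, apply the classical Mergelyan theorem (polynomial or rational version according to the connectivity of $\mathbb{C}(\mathbf{i})\setminus K_i$) to each component, and recombine using the coordinatewise nature of $|\cdot|_{\mathbf{k}}$ and the order $\preceq$. Your additional care with the pole bookkeeping and the convention for ``bicomplex poles'' in $\overline{\mathbb{BC}}$ is more explicit than the paper's treatment but does not change the argument.
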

\begin{proof}
	\begin{itemize}
		
\item[(i)] We have $F(Z) = F_1(Z_1)\mathbf{e_1} + F_2(Z_2)\mathbf{e_2} \in   \mathcal{A}(K) $ and   $  \mathbb{BC} \setminus K $  is of the type $\mathcal{T}_1$. So $F_j(Z_j)  \in   \mathcal{A}(K_j)$ for $ j = 1, 2 $ and  $\mathbb{C}(\mathbf{i})\setminus K_j $ for $ j = 1, 2 $ have finitely  many components. Then  by Mergelyan's approximation theorem for complex  function,  there exists a rational function $R_j(Z_j) $ for $ j = 1, 2 $ with poles in $P_j \in \mathbb{C}(\mathbf{i})\setminus K_j $  for $ j = 1, 2 $ such that,
	$$ \sup_{Z_j\in K_j} |F_j(Z_j) - R_j(Z_j)| < \epsilon~~for~~  j = 1, 2  .$$
Now,
\begin{eqnarray*}
	 \sup_{z\in K} |F(Z) - R(Z)|_{\mathbf{k}} &\preceq&  \sup_{Z_1\in K_1}|F_1(Z_1) - R_1(Z_1)| \mathbf{e_1} + \sup_{Z_2\in K_2}|F_2(Z_2) - R_2(Z_2) | \mathbf{e_2}\\ &\prec& \epsilon \mathbf{e_1}	+  \epsilon\mathbf{e_2} \\
	 &\prec&  \epsilon,
 \end{eqnarray*}
where $R(Z) = R_1(Z_1)\mathbf{e_1} + R_2(Z_2)\mathbf{e_2}$ is a bicomplex  rational function with bicomplex poles $ P = P_1 \mathbf{e_1}  + P_2\mathbf{e_2}  \in \mathbb{BC}\setminus( K \cup \mathcal{O}) $.

\item[(ii)]  We have $F(Z) = F_1(Z_1)\mathbf{e_1} + F_2(Z_2)\mathbf{e_2} \in   \mathcal{A}(K) $ and   $  \mathbb{BC} \setminus K $  is of the type $\mathcal{T}_2$. So $F_1(Z_1)  \in   \mathcal{A}(K_1)$  and  $\mathbb{C}(\mathbf{i})\setminus K_1 $  have finitely  many components. Then  by Mergelyan's approximation theorem for complex  function,  there exist a rational function $R_1(Z_1) $  with poles in $P_1 \in (\mathbb{C}(\mathbf{i})\setminus K_1) $  such that,
		$$ \sup_{Z_1\in K_1} |F_1(Z_1) - R_1(Z_1)| < \epsilon .$$
Also, $F_2(Z_2)  \in   \mathcal{A}(K_2)$  and  $(\mathbb{C}(\mathbf{i})\setminus K_2) $  is connected, then again by Mergelyan's approximation theorem for complex  function,  there exist a polynomial $P_2(Z_2) $  with only pole in $\infty \in (\mathbb{C}(\mathbf{i})\setminus K_2) $  such that,
		$$ \sup_{Z_2\in K_2} |F_2(Z_2) - P_2(Z_2)| < \epsilon .$$
Now,
\begin{eqnarray*}
			\sup_{z\in K} |F(Z) - R(Z)|_{\mathbf{k}} &\preceq&  \sup_{Z_1\in K_1}|F_1(Z_1) - R_1(Z_1)| \mathbf{e_1} + \sup_{Z_2\in K_2}|F_2(Z_2) - P_2(Z_2) | \mathbf{e_2}\\ &\prec& \epsilon \mathbf{e_1}	+  \epsilon\mathbf{e_2} \\
			&\prec&  \epsilon,
\end{eqnarray*}
where $R(Z) = R_1(Z_1)\mathbf{e_1} + P_2(Z_2)\mathbf{e_2}$ is a bicomplex  rational function with bicomplex poles as bicomplex infinity $ \beta_1 \mathbf{e_1} + \infty\mathbf{e_2} \in \overline{\mathbb{BC}}\setminus( K \cup \mathcal{O}) $,  with $\beta_1 \in (\mathbb{C}(\mathbf{i})\setminus K_1)$.

\item[(iii)]  The proof of this part is  similar to that of case (ii).

\item[(iv)] We have $F(Z) = F_1(Z_1)\mathbf{e_1} + F_2(Z_2)\mathbf{e_2} \in   \mathcal{A}(K) $ and   $  \mathbb{BC} \setminus K $  is of the type $\mathcal{T}_4$. So $F_j(Z_j)  \in   \mathcal{A}(K_j)$ for $ j = 1, 2 $ and  $\mathbb{C}(\mathbf{i})\setminus K_j $ for $ j = 1, 2 $ is connected. Then  by Mergelyan's approximation theorem for complex  function,  there exist a polynomials $P_j(Z_j) $ for $ j = 1, 2 $ with poles as  $\infty \in \mathbb{C}(\mathbf{i})\setminus K_j $  for $ j = 1, 2 $ such that,
$$ \sup_{Z_j\in K_j} |F_j(Z_j) - R_j(Z_j)| < \epsilon~~for~~  j = 1, 2  .$$
Now,
\begin{eqnarray*}
	\sup_{z\in K} |F(Z) - P(Z)|_{\mathbf{k}} &\preceq&  \sup_{Z_1\in K_1}|F_1(Z_1) - P_1(Z_1)| \mathbf{e_1} + \sup_{Z_2\in K_2}|F_2(Z_2) - P_2(Z_2) | \mathbf{e_2}\\ &\prec& \epsilon \mathbf{e_1}	+  \epsilon\mathbf{e_2} \\
	&\prec&  \epsilon,
\end{eqnarray*}
where $P(Z) = P_1(Z_1)\mathbf{e_1} + P_2(Z_2)\mathbf{e_2}$ is a bicomplex polynomial with bicomplex pole as bicomplex infnity $ \infty \mathbf{e_1} + \infty\mathbf{e_2} \in \overline{\mathbb{BC}}\setminus( K \cup \mathcal{O}) $. This complete proof.
\end{itemize}
\end{proof}

\begin{theorem}\label{Mergelyan's} Let K be a product-type compact subset of $\mathbb{BC}$ and    $F(Z) \in \mathcal{A}(K)$.  
\begin{itemize}	
\item[(i)] If $\mathbb{BC}\setminus( K \cup \mathcal{O})$  is of the type $\mathcal{T}_1$ having finitely many components and $P \subset \mathcal{T}_1$ contains one point from each connected component of $\mathcal{T}_1$, then $F(Z)$ can be approximated  on $K$  by a bicomplex rational function $R(Z)$ with bicomplex poles only in $P$ i.e.,
$$\sup_{z\in K} |F(Z) - R(Z)|_{\mathbf{k}} \prec \epsilon .$$
\item[(ii)] If $\mathbb{BC}\setminus( K \cup \mathcal{O})$  is of the type $\mathcal{T}_2$ having finitely many components, then  $F(Z)$ can be approximated  on $K$  by a bicomplex rational function $R(Z) = R_1(Z_1)\mathbf{e_1} + P_2(Z_2)\mathbf{e_2} $, where $R_1(Z_1)$ is rational function in $\mathbb{C}(\mathbf{i})$ and $P_2(Z_2)$ is polynomial in $\mathbb{C}(\mathbf{i})$, with bicomplex  poles   $ \beta_1 \mathbf{e_1} + \infty\mathbf{e_2} \in \overline{\mathbb{BC}}\setminus( K \cup \mathcal{O}) $  and $\beta_1 \in (\mathbb{C}(\mathbf{i})\setminus K_1)$ such that,
$$\sup_{z\in K} |F(Z) - R(Z)|_{\mathbf{k}}\prec \epsilon .$$
\item[(iii)] If $\mathbb{BC}\setminus( K \cup \mathcal{O})$  is of the type $\mathcal{T}_3$ having finitely many components, then  $F(Z)$ can be approximated  on $K$  by a bicomplex rational function $R(Z) = P_1(Z_1)\mathbf{e_1} + R_2(Z_2)\mathbf{e_2} $ where $P_1(Z_1)$ is polynomial in $\mathbb{C}(\mathbf{i})$ and $R_2(Z_2)$ is rational function in $\mathbb{C}(\mathbf{i})$, with bicomplex poles $ \infty\mathbf{e_1} + \beta_2\mathbf{e_2} \in \overline{\mathbb{BC}}\setminus( K \cup \mathcal{O})$ and $\beta_2 \in (\mathbb{C}(\mathbf{i})\setminus K_2)$ such that,
$$\sup_{z\in K} |F(Z) - R(Z)|_{\mathbf{k}} \prec \epsilon .$$

\item[(iv)] If $\mathbb{BC}\setminus( K \cup \mathcal{O})$  is of the type $\mathcal{T}_4$, then $F(Z)$ can be approximated  on $K$  by a bicomplex polynomial $P(Z)$ with bicomplex pole $ (\infty\mathbf{e_1} + \infty\mathbf{e_2}) \in \overline{\mathbb{BC}}\setminus( K \cup \mathcal{O})$ such that,
$$\sup_{z\in K} |F(Z) - P(Z)|_{\mathbf{k}} \prec \epsilon .$$		
\end{itemize}
\end{theorem}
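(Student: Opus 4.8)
The plan is to run the same idempotent reduction used in the proof of the preceding theorem, now keeping track of the extra deletion of the null cone. Write $K = K_1\mathbf{e_1}+K_2\mathbf{e_2}$ and $F(Z) = F_1(Z_1)\mathbf{e_1}+F_2(Z_2)\mathbf{e_2}$ with $F_j\in\mathcal{A}(K_j)$. For each of the four cases it suffices to produce complex rational (or polynomial) approximants $R_j$ of $F_j$ on $K_j$, put $R(Z) = R_1(Z_1)\mathbf{e_1}+R_2(Z_2)\mathbf{e_2}$, and invoke the $\mathbb{D}$-valued estimate
\[
\sup_{Z\in K}|F(Z)-R(Z)|_{\mathbf{k}} \preceq \Big(\sup_{Z_1\in K_1}|F_1(Z_1)-R_1(Z_1)|\Big)\mathbf{e_1} + \Big(\sup_{Z_2\in K_2}|F_2(Z_2)-R_2(Z_2)|\Big)\mathbf{e_2} \prec \epsilon\mathbf{e_1}+\epsilon\mathbf{e_2}\prec\epsilon ,
\]
exactly as before. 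So the whole content is to translate the hypothesis on $\mathbb{BC}\setminus(K\cup\mathcal{O})$ into the corresponding hypothesis on $\mathbb{C}(\mathbf{i})\setminus K_1$ and $\mathbb{C}(\mathbf{i})\setminus K_2$, and to make sure the resulting bicomplex poles land in $\overline{\mathbb{BC}}\setminus(K\cup\mathcal{O})$.

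The new ingredient is the null cone. First I would record that in idempotent coordinates $\mathcal{O}_0=\mathcal{O}\cup\{0\}=\{\beta_1\mathbf{e_1}+\beta_2\mathbf{e_2}:\beta_1\beta_2=0\}=(\{0\}\times\mathbb{C}(\mathbf{i}))\cup(\mathbb{C}(\mathbf{i})\times\{0\})$, so that $\mathbb{BC}\setminus\mathcal{O}_0=(\mathbb{C}(\mathbf{i})\setminus\{0\})\times(\mathbb{C}(\mathbf{i})\setminus\{0\})$. Thus $\mathbb{BC}\setminus(K\cup\mathcal{O})$ is obtained from $\mathbb{BC}\setminus K$ by deleting (part of) the two coordinate hyperplanes, a real two-dimensional subset of $\mathbb{R}^4$; this neither merges nor creates connected components, so $\mathbb{BC}\setminus(K\cup\mathcal{O})$ carries the same type $\mathcal{T}_1,\dots,\mathcal{T}_4$ as $\mathbb{BC}\setminus K$, and hence the classification forces the same "connected / finitely many components" statement on each factor $\mathbb{C}(\mathbf{i})\setminus K_j$. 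With this dictionary in hand, in each case apply the classical complex Mergelyan theorem to $F_1$ and to $F_2$, choosing the pole $\beta_j$ (or $\infty$) of $R_j$ inside the prescribed component of $\mathbb{C}(\mathbf{i})\setminus K_j$ with the additional requirement $\beta_j\neq0$, which is always possible because a component of an open subset of $\mathbb{C}(\mathbf{i})$ is infinite. Then the bicomplex pole $\beta_1\mathbf{e_1}+\beta_2\mathbf{e_2}$ (resp.\ $\beta_1\mathbf{e_1}+\infty\mathbf{e_2}$, $\infty\mathbf{e_1}+\beta_2\mathbf{e_2}$, $\infty\mathbf{e_1}+\infty\mathbf{e_2}$) has both idempotent coordinates nonzero, so it lies in $\overline{\mathbb{BC}}\setminus(K\cup\mathcal{O})$, and $R(Z)=R_1(Z_1)\mathbf{e_1}+R_2(Z_2)\mathbf{e_2}$ is a genuine product-type $\mathbb{BC}$-rational (resp.\ polynomial) function whose denominator avoids $\mathcal{O}_0$ on $K$.

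I expect the main obstacle to be purely this topological bookkeeping: since $(K_1\times K_2)\cup\mathcal{O}$ is not a product set, one cannot manipulate $\mathbb{BC}\setminus(K\cup\mathcal{O})$ by factoring, and the identification of its components with those of the $\mathbb{C}(\mathbf{i})\setminus K_j$ has to be argued component-by-component (using that deleting a closed set of real codimension $\geq 2$ from an open subset of $\mathbb{R}^4$ does not change connectedness). Once that is settled, items (i)--(iv) follow verbatim from the complex Mergelyan theorem together with the displayed $|\cdot|_{\mathbf{k}}$-inequality, precisely as in the preceding theorem; (iii) is symmetric to (ii) and (iv) is the specialization of (i) to the connected case with all poles at bicomplex infinity.
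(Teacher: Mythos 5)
Your proposal is correct and follows essentially the same route as the paper: idempotent decomposition $F = F_1\mathbf{e_1}+F_2\mathbf{e_2}$, the classical complex Mergelyan theorem applied to each $F_j$ on $K_j$, and the componentwise $|\cdot|_{\mathbf{k}}$ estimate to combine the two approximants. In fact your explicit bookkeeping for the null cone --- identifying $\mathcal{O}_0$ with $\{\beta_1\beta_2=0\}$ in idempotent coordinates, observing that removing this codimension-two set does not alter the component structure, and choosing the poles $\beta_j\neq 0$ so that the bicomplex pole lies in $\overline{\mathbb{BC}}\setminus(K\cup\mathcal{O})$ --- is more careful than the paper's own proof, which simply repeats the argument of the preceding theorem without addressing how deleting $\mathcal{O}$ affects the hypotheses.
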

\begin{proof}
	\begin{itemize}
		
\item[(i)] We have $F(Z) = F_1(Z_1)\mathbf{e_1} + F_2(Z_2)\mathbf{e_2} \in A(K) $ and   $\mathbb{BC}\setminus( K \cup \mathcal{O})$  is of the type $\mathcal{T}_1$. So $F_j(Z_j)  \in A(K_j)$ for $ j = 1, 2 $ and  $\mathbb{C}(\mathbf{i})\setminus K_j $ for $ j = 1, 2 $ have finitely  many components. Then  by Mergelyan's approximation theorem for complex  function,  there exists a rational function $R_j(Z_j) $ for $ j = 1, 2 $ with poles in $P_j \in \mathbb{C}(\mathbf{i})\setminus K_j $  for $ j = 1, 2 $ such that,
	$$ \sup_{Z_j\in K_j} |F_j(Z_j) - R_j(Z_j)| < \epsilon~~for~~  j = 1, 2  .$$
Now,
\begin{eqnarray*}
	 \sup_{z\in K} |F(Z) - R(Z)|_{\mathbf{k}} &\preceq&  \sup_{Z_1\in K_1}|F_1(Z_1) - R_1(Z_1)| \mathbf{e_1} + \sup_{Z_2\in K_2}|F_2(Z_2) - R_2(Z_2) | \mathbf{e_2}\\ &\prec& \epsilon \mathbf{e_1}	+  \epsilon\mathbf{e_2} \\
	 &\prec&  \epsilon,
 \end{eqnarray*}
where $R(Z) = R_1(Z_1)\mathbf{e_1} + R_2(Z_2)\mathbf{e_2}$ is a bicomplex  rational function with bicomplex poles $ P = P_1 \mathbf{e_1}  + P_2\mathbf{e_2}  \in \mathcal{T}_1 $.

\item[(ii)]  We have $F(Z) = F_1(Z_1)\mathbf{e_1} + F_2(Z_2)\mathbf{e_2} \in A(K) $ and   $\mathbb{BC}\setminus( K \cup \mathcal{O})$  is of the type $\mathcal{T}_2$. So $F_1(Z_1)  \in A(K_1)$  and  $\mathbb{C}(\mathbf{i})\setminus K_1 $  have finitely  many components. Then  by Mergelyan's approximation theorem for complex  function,  there exist a rational function $R_1(Z_1) $  with poles in $P_1 \in (\mathbb{C}(\mathbf{i})\setminus K_1) $  such that,
		$$ \sup_{Z_1\in K_1} |F_1(Z_1) - R_1(Z_1)| < \epsilon .$$
Also, $F_2(Z_2)  \in A(K_2)$  and  $(\mathbb{C}(\mathbf{i})\setminus K_2) $  is connected, then again by Mergelyan's approximation theorem for complex  function,  there exist a polynomial $P_2(Z_2) $  with only pole in $\infty \in (\mathbb{C}(\mathbf{i})\setminus K_2) $  such that,
		$$ \sup_{Z_2\in K_2} |F_2(Z_2) - P_2(Z_2)| < \epsilon .$$
Now,
\begin{eqnarray*}
			\sup_{z\in K} |F(Z) - R(Z)|_{\mathbf{k}} &\preceq&  \sup_{Z_1\in K_1}|F_1(Z_1) - R_1(Z_1)| \mathbf{e_1} + \sup_{Z_2\in K_2}|F_2(Z_2) - P_2(Z_2) | \mathbf{e_2}\\ &\prec& \epsilon \mathbf{e_1}	+  \epsilon\mathbf{e_2} \\
			&\prec&  \epsilon,
\end{eqnarray*}
where $R(Z) = R_1(Z_1)\mathbf{e_1} + P_2(Z_2)\mathbf{e_2}$ is a bicomplex  rational function with bicomplex poles as bicomplex infinity $ \beta_1 \mathbf{e_1} + \infty\mathbf{e_2} \in \overline{\mathbb{BC}}\setminus( K \cup \mathcal{O}) $,  with $\beta_1 \in (\mathbb{C}(\mathbf{i})\setminus K_1)$.

\item[(iii)]  The proof of this part is  similar to that of case (ii).

\item[(iv)] We have $F(Z) = F_1(Z_1)\mathbf{e_1} + F_2(Z_2)\mathbf{e_2} \in A(K) $ and   $\mathbb{BC}\setminus( K \cup \mathcal{O})$  is of the type $\mathcal{T}_4$. So $F_j(Z_j)  \in A(K_j)$ for $ j = 1, 2 $ and  $\mathbb{C}(\mathbf{i})\setminus K_j $ for $ j = 1, 2 $ is connected. Then  by Mergelyan's approximation theorem for complex  function,  there exist a polynomials $P_j(Z_j) $ for $ j = 1, 2 $ with poles as  $\infty \in \mathbb{C}(\mathbf{i})\setminus K_j $  for $ j = 1, 2 $ such that,
$$ \sup_{Z_j\in K_j} |F_j(Z_j) - R_j(Z_j)| < \epsilon~~for~~  j = 1, 2  .$$
Now,
\begin{eqnarray*}
	\sup_{z\in K} |F(Z) - P(Z)|_{\mathbf{k}} &\preceq&  \sup_{Z_1\in K_1}|F_1(Z_1) - P_1(Z_1)| \mathbf{e_1} + \sup_{Z_2\in K_2}|F_2(Z_2) - P_2(Z_2) | \mathbf{e_2}\\ &\prec& \epsilon \mathbf{e_1}	+  \epsilon\mathbf{e_2} \\
	&\prec&  \epsilon,
\end{eqnarray*}
where $P(Z) = P_1(Z_1)\mathbf{e_1} + P_2(Z_2)\mathbf{e_2}$ is a bicomplex polynomial with bicomplex pole as bicomplex infnity $ \infty \mathbf{e_1} + \infty\mathbf{e_2} \in \overline{\mathbb{BC}}\setminus( K \cup \mathcal{O}) $. This complete proof.
\end{itemize}
\end{proof}

\end{document}